\newtheorem{theorem}{Theorem}[section]
\newtheorem{lemma}{Lemma}[section]
\theoremstyle{definition}
\newtheorem{example}{Example}[section]
\newtheorem{algorithm}{Algorithm}[section]
\newcommand{\Z}{\mathbb{Z}}
\newcommand{\Q}{\mathbb{Q}}
\newcommand{\K}{\mathbb{K}}
\newcommand{\C}{\mathbb{C}}
\newcommand{\EE}{\mathbb{E}}
\begin{document}

\title[On parameters of orders of quartic fields and essential pairs]{On parameters of orders of quartic fields and essential pairs -- extended version}

\author[Samuel A. Hambleton, Randy Yee]{Samuel A. Hambleton, Randy Yee}

\address{School of Mathematics and Physics, The University of Queensland, St. Lucia, Queensland, Australia 4072}

\email{sah@maths.uq.edu.au}

\address{Department of Computer Science, University of Calgary, 2500 University Drive NW, Calgary, AB, Canada T2N 1N4}

\email{randy.yee1@ucalgary.ca}

\subjclass[2010]{Primary 11R04; Secondary 11E20}

\date{February 18, 2020.}

\keywords{parametrization of quartic rings, pairs of ternary quadratic forms}

\maketitle

\begin{abstract}
Classes of pairs of ternary quadratic forms parametrize quartic rings by a result of Bhargava. We give an algorithm for finding a pair of ternary quadratic forms that parametrize a given order of a quartic field. We examine a new technique, essential pairs, for obtaining parameters of orders of quartic fields from a number field database. Essential pairs for maximal orders of quartic fields are very common and provide a simple means of obtaining an integral basis for the ring of integers. 
\end{abstract}

\section{Introduction}\label{intro}

Binary cubic forms play an important role in working with cubic number fields. For each cubic number field $\K = \Q (\delta )$ of discriminant $\Delta $, there is a $\text{GL}_2(\Z )$-class of binary cubic forms such that each member $$\mathcal{C}(x, y) = a x^3 + b x^2 y + c x y^2 + d y^3 $$ of the class has discriminant $\Delta $. In defining $\K$ by a form, we may assume with no loss of generality that $\mathcal{C}$ is reduced and irreducible in $\Q [x, y]$ \cite[pp. 125--137]{cfg4} and hence associate the maximal order of $\K$ with a unique reduced binary cubic form $\mathcal{C}$ and choose the generator $\delta $ of $\K$ to be a real root of the cubic polynomial $\mathcal{C}(x, 1)$. This means that there is an integral basis $\left\{ 1, a \delta , a \delta^2 + b \delta \right\}$ found by Nakagawa \cite{Nakagawa4} and Belabas \cite{Belabas4}, where $a$ and $b$ satisfy certain bounds. Levi \cite{Levi4} showed that for each order of a cubic field there is a $\text{GL}_2(\Z )$-class of binary cubic forms that parametrize the order, a relationship further investigated by Delone and Faddeev \cite{DeloneFaddeev4}; binary cubic forms parametrize cubic rings. This allows us to use reduction of binary cubic forms to distinguish between orders of cubic fields and to work with cubic fields with small coefficients $a, b$. Such bounds can be very useful. Hambleton and Williams \cite{cfg4} made use of the bounds on the coefficients of a reduced binary cubic form in their observations on Voronoi's continued fraction algorithm for finding units of cubic fields. This research was motivated by our desire to do something similar with the parameters of orders of quartic fields. However, the problem is complicated by the fact that totally real quartic forms are parametrized by pairs of indefinite ternary quadratic forms and techniques for their reduction are not suitable. Instead some other useful techniques for working with quartic fields were found: the use of matrices rather than multiplication tables, the concept of essential pairs, and the algorithm for finding parameters that produce a given quartic order. 

Bhargava \cite{Bhargavathesis, Bhargavacomp4} showed that classes of pairs of ternary quadratic forms parametrize quartic rings, however the question of how to produce parameters from a given quartic ring, although implicitly given in Bhargava's work \cite{Bhargavacomp4}, was not entirely detailed. O'Dorney \cite{ODorney} has answered this more explicitly, however the results presented are given in terms of ideals and no explicit algorithm or simple example is apparent in that article. We will solve this problem here using the Hermite normal form of an integer matrix and that which we present has the advantage of allowing data from the multiplication table of the quartic ring in question to be directly input into an integer matrix. Our algorithm provides twelve integer parameters that are coefficients of a pair of ternary quadratic forms that parametrize the given order of a quartic field and all calculations are entirely elementary. 

Bhargava's technique \cite{Bhargavacomp4} for understanding quartic rings involved the consideration of a pair of classes of ternary quadratic forms as the objects which parametrize quartic rings. A ternary quadratic form 
\begin{equation*}
\mathcal{Q}_A = a_{11} x^2 + a_{12} x y + a_{13} x z + a_{22} y^2 + a_{23} y z + a_{33} z^2
\end{equation*}
can be expressed using a symmetric half-integral matrix $A$, where 
\begin{align*}
A & = \frac{1}{2} \left(
\begin{array}{ccc}
 2 a_{11} & a_{12} & a_{13} \\
 a_{12} & 2 a_{22} & a_{23} \\
 a_{13} & a_{23} & 2 a_{33} \\
\end{array}
\right) , & \mathcal{Q}_A & = \left(
\begin{array}{ccc}
 x & y & z \\
\end{array}
\right) A \left(
\begin{array}{c}
 x \\
 y \\
 z \\
\end{array}
\right) ,
\end{align*}
and $a_{ij} \in \Z$. Classes of pairs of ternary quadratic forms are defined in terms of matrix multiplication involving elements of $\text{GL}_3(\Z )$ and $\text{GL}_2(\Z )$. The pair of ternary quadratic forms $\left( \mathcal{Q}_A , \mathcal{Q}_B \right)$ corresponds to the pair of matrices $(A, B)$. The action of $G_3 \in \text{GL}_3(\Z )$ on $(A, B)$ is to send $(A, B)$ to the pair of matrices $\left( G_3^{\top} A G_3 , G_3^{\top} B G_3 \right)$, where $G_3^{\top}$ denotes the transpose of $G_3$. A matrix $\left(
\begin{array}{cc}
 p & q \\
 r & s \\
\end{array}
\right) \in \text{GL}_2(\Z )$ also has an action on $(A, B)$, which is to send $(A, B)$ to the pair of matrices $(p A + q B, r A + s B)$. We elaborate on these two actions below.

Let 
\begin{eqnarray*}
\mathcal{Q}_A & = & a_{11} x^2 + a_{12} x y + a_{13} x z + a_{22} y^2 + a_{23} y z + a_{33} z^2 , \\
\mathcal{Q}_B & = & b_{11} x^2 + b_{12} x y + b_{13} x z + b_{22} y^2 + b_{23} y z + b_{33} z^2 
\end{eqnarray*}
be the ternary quadratic forms corresponding to the symmetric half-integral matrices 
\begin{align*}
A & = \frac{1}{2} \left(
\begin{array}{ccc}
 2 a_{11} & a_{12} & a_{13} \\
 a_{12} & 2 a_{22} & a_{23} \\
 a_{13} & a_{23} & 2 a_{33} \\
\end{array}
\right) , & B & = \frac{1}{2} \left(
\begin{array}{ccc}
 2 b_{11} & b_{12} & b_{13} \\
 b_{12} & 2 b_{22} & b_{23} \\
 b_{13} & b_{23} & 2 b_{33} \\
\end{array}
\right) , 
\end{align*}
and let 
\begin{equation*}
G_3 = \left(
\begin{array}{ccc}
 u_{11} & u_{12} & u_{13} \\
 u_{21} & u_{22} & u_{23} \\
 u_{31} & u_{32} & u_{33} \\
\end{array}
\right) \in \text{GL}_3(\Z ) .
\end{equation*}
Then the replacement (denoted by $\mathcal{Q}_A \circ G_3$) 
\begin{equation*}
\left(
\begin{array}{c}
 x \\
 y \\
 z \\
\end{array}
\right) \longmapsto G_3 \left(
\begin{array}{c}
 x \\
 y \\
 z \\
\end{array}
\right) 
\end{equation*}
in $\mathcal{Q}_A (x, y, z)$ and $\mathcal{Q}_B (x, y, z)$ produces a pair of ternary quadratic forms \\ 
$\left( \mathcal{Q}_{\overline{A}} (x, y, z), \mathcal{Q}_{\overline{B}} (x, y, z) \right)$ that corresponds to the pair of matrices $$\left( \overline{A} , \overline{B} \right) = \left( G_3^{\top} A G_3 , G_3^{\top} B G_3 \right) .$$ Notice that 
\begin{equation*}
\mathcal{Q}_A \circ G_3 = \left(
\begin{array}{ccc}
 x & y & z \\
\end{array}
\right) G_3^{\top} A G_3 \left(
\begin{array}{c}
 x \\
 y \\
 z \\
\end{array}
\right) .
\end{equation*}

We have described the action of an element $G_3 \in \text{GL}_3(\Z )$ on $(A, B)$. Now consider the action of $G_2 \in \text{GL}_2(\Z )$ on $(A, B)$. Let $G_2 = \left(
\begin{array}{cc}
 p & q \\
 r & s \\
\end{array}
\right) \in \text{GL}_2(\Z )$. It is easy to check that the binary cubic form $4 \det (A x + B y)$ satisfies 
\begin{equation*}
4 \det \left( (p A + q B) x + (r A + s B) y \right) = 4 \det (A x + B y) \circ G_2^{\top} ,
\end{equation*}
where $4 \det (A x + B y) \circ G_2^{\top}$ denotes the replacement 
\begin{equation*}
\left(
\begin{array}{c}
 x \\
 y \\
\end{array}
\right) \longmapsto G_2^{\top} \left(
\begin{array}{c}
 x \\
 y \\
\end{array}
\right) 
\end{equation*}
in $4 \det (A x + B y)$. It is important to note that when we are using binary cubic forms to define cubic fields, these forms will necessarily be irreducible in $\Q [x, y]$ whereas when we discuss binary cubic forms in the context of quartic fields they may have linear factors in $\Q [x, y]$.

We will say that a pair of symmetric half integral matrices $(C,D)$ lies in the same $GL_3(\Z), GL_2(\Z)$-class as $(A,B)$ if and only if there exist matrices $G_3$ and $G_2$ as defined above, such that
\begin{align*}
    \left( \overline{A} , \overline{B} \right) = \left( G_3^{\top} A G_3 , G_3^{\top} B G_3 \right), \quad 
    (C,D) = (p \overline{A} + q \overline{B}, r \overline{A} + s \overline{B}),
\end{align*}
for some symmetric half-integral matrices $\overline{A}$ and $\overline{B}$.

The main goals of this article are to give formulas for two ternary quadratic forms that parametrize a given order of a quartic field and to facilitate the construction of an integral basis of the maximal order of a quartic field using the concept of essential pairs that will be introduced in Section \ref{essentpairs}. Following the work of Bhargava \cite{Bhargavacomp4} and Wood \cite{Wood4}, we will begin in Section \ref{mapforms} by discussing how a pair of ternary quadratic forms produces an order of a quartic field. We illustrate a convenient matrix method for working with these rings, an approach introduced in \cite{hamblarithnf4, cfg4}, and exhibit our technique for constructing a normalized basis for the ring of integers of a quartic field. In Section \ref{getpair} we describe an algorithm for finding a pair of symmetric half-integral matrices that parametrize a given order of a quartic field. To our knowledge, such an algorithm is currently absent from the literature, however O'Dorney \cite{ODorney} has given another technique to find the parameters although our procedure is elementary and simple to use. In order to easily obtain normalized integral bases of number fields, we use the LMFDB database \cite{LMFDB} together with a relatively new method for calculating the integral basis of the ring of integers of a quartic field discussed in Section \ref{essentpairs}. 

\section*{Acknowledgements}

We thank Renate Scheidler for helping us to improve the clarity of our explanations. 

\section{Mapping a pair of ternary quadratic forms to an order}\label{mapforms}

To understand how a pair of ternary quadratic forms gives rise to an quartic ring we will first consider how binary quartic forms produce special kinds of orders of quartic fields, a result due to Wood \cite{Wood4}, however we will place this result in the context of the covariants of binary cubic forms, an observation that is possibly new. The binary quartic form $\mathcal{V} = (a, b, c, d, e)$ has the same discriminant $\Delta $ as the binary cubic form 
\begin{equation}\label{resolvform}
\mathcal{C} = \left( 1, - c, b d - 4 a e, 4 a c e - a d^2 - b^2 e \right) . 
\end{equation}
Denoting the coefficients of $\mathcal{C}$ in \eqref{resolvform} by $A, B, C, D$, we let 
\begin{eqnarray*}
\mathcal{Q} & = & \left( B^2 - 3 A C \right) x^2 + (B C - 9 A D) x y + \left( C^2 - 3 B D \right) y^2 , \\ 
\mathcal{F} & = & \left( -27 A^2 D+9 A B C-2 B^3 \right) x^3 + \left( - 27 A B D + 18 A C^2 - 3 B^2 C \right) x^2 y \\
            &   & + \left( 27 A C D - 18 B^2 D + 3 B C^2 \right) x y^2 + \left( 27 A D^2 - 9 B C D + 2 C^3 \right) y^3 ,
\end{eqnarray*}
the Hessian of $\mathcal{C}$ and the Jacobian $\mathcal{F}$ of $\mathcal{C}$, and let $i = \mathcal{Q}(1, 0)$ and $j = - \mathcal{F}(1, 0)$. Since $\mathcal{C}(1, 0) = 1$, the identity 
\begin{equation}\label{ijinvariants}
j^2 + 27 \Delta = 4 i^3
\end{equation}
is a consequence of Cayley's syzygy $\mathcal{F}^2 + 27 \Delta \mathcal{C}^2 = 4 \mathcal{Q}^3$. Hambleton and Williams \cite[pp. 21--24]{cfg4} wrote much about the role of this syzygy in the study of cubic number fields.  

The binary cubic form $\mathcal{C}$ plays an important role in both solving quartic equations algebraically and in parameterizing quartic rings. We will refer to $\mathcal{C}$ satisfying \eqref{resolvform} as the {\em cubic resolvent form of $\mathcal{V}$}. It is natural to suspect that binary quartic forms parametrize quartic rings, however, subsequent to Bhargava's revelation that pairs of ternary quadratic forms parametrize quartic rings, Wood \cite{Wood4} proved the following result showing that classes of binary quartic forms only parametrize some quartic rings, those generated by a single element. 
\begin{theorem}\label{woodone}
There is a discriminant preserving bijection between the $\text{GL}_2(\Z )$-class of a binary quartic forms and the isomorphism class of the pair $\left( R_4, R_3 \right)$, where $R_4$ is a quartic ring and $R_3$ is a monogenic cubic resolvent ring (generated by a single element). 
\end{theorem}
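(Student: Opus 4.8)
The plan is to follow Wood's approach \cite{Wood4} but to make the cubic-resolvent side explicit via the covariant formulas already assembled above, so that the bijection can be read off from the syzygy \eqref{ijinvariants}. First I would recall the construction in one direction: given a binary quartic form $\mathcal{V} = (a,b,c,d,e) \in \Z[x,y]$, one builds the ring $R_4 = \Z\langle 1, \theta, \theta^2, \theta^3 \rangle$ with multiplication table dictated by the coefficients of $\mathcal{V}$ — concretely, $R_4 = \Z[x]/(x^4 - \text{lower order})$ after clearing the leading coefficient in the usual way, so that $\theta$ is a root of $a^3 \mathcal{V}(x,1)/a$ up to the standard normalization. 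The cubic resolvent ring $R_3$ is then $\Z[j]$ where $j = -\mathcal{F}(1,0)$ satisfies the monic cubic whose discriminant is $\Delta$, namely the one governed by \eqref{ijinvariants}; monogenicity of $R_3$ is automatic because $R_3$ is by construction generated by the single element $j$. This defines the forward map on objects, and $\text{GL}_2(\Z)$-equivalence of forms is carried to isomorphism of pairs because the action $\mathcal{V} \mapsto \mathcal{V} \circ G_2$ transforms $\mathcal{C}$ by the covariant action (the cubic $\mathcal{C}$ of \eqref{resolvform} is a classical covariant of $\mathcal{V}$), hence transforms $(i,j)$ and the multiplication tables compatibly.

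Next I would construct the inverse map. Given a pair $(R_4, R_3)$ with $R_3$ monogenic, pick a generator $\delta$ of $R_3$ over $\Z$ and use the resolvent map $R_4 \to R_3$ (the cubic resolvent in Bhargava's sense \cite{Bhargavacomp4}, or equivalently Wood's \cite{Wood4}) to pull back $\delta$ to an element $\theta \in R_4$; one checks that $\{1,\theta,\theta^2,\theta^3\}$ is a $\Z$-basis of $R_4$ — this is where monogenicity of $R_3$ is used, since it forces $R_4$ to be generated by a single element over $\Z$ — and that the resulting multiplication structure is exactly that of a binary quartic form, whose coefficients one writes down. Then I would verify the two composites are the identity: starting from $\mathcal{V}$, passing to $(R_4,R_3)$ and back returns a $\text{GL}_2(\Z)$-translate of $\mathcal{V}$ because the choice of generator $\delta$ of $R_3$ is well-defined up to $\delta \mapsto \pm\delta + m$, and this is precisely the residual freedom that $\text{GL}_2(\Z)$ acts by on the quartic side after normalizing the leading coefficient; conversely starting from a pair and returning reproduces it by the same bookkeeping. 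Discriminant preservation is then the identity $\operatorname{disc}(\mathcal{V}) = \Delta = \operatorname{disc}(\mathcal{C})$ recorded after \eqref{resolvform}, together with the fact that $\operatorname{disc}(R_4) = \operatorname{disc}(\mathcal{V})$ by the standard formula for the discriminant of a monogenic ring.

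The main obstacle I expect is the well-definedness and surjectivity of the inverse map: showing that for \emph{every} monogenic cubic resolvent ring $R_3$ the pulled-back element $\theta$ genuinely generates $R_4$ as a $\Z$-module, rather than merely a finite-index subring, and that the leading coefficient $a$ one extracts is consistent across the isomorphism class. This requires carefully tracking the resolvent map at the level of $\Z$-lattices — essentially Bhargava's local argument that the resolvent is an isomorphism onto its image when $R_3 = \Z[\delta]$ — and checking it survives the normalization that turns a multiplication table into the integer quintuple $(a,b,c,d,e)$. Everything else is the bookkeeping of matching the two group actions, which the covariant formulas for $\mathcal{Q}$ and $\mathcal{F}$ above and the syzygy \eqref{ijinvariants} make routine; I would relegate those verifications to direct computation and cite \cite{Wood4} for the parts that coincide with her treatment.
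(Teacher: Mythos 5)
Your proposal misidentifies both rings in the pair, and the error is not cosmetic: it breaks discriminant preservation and inverts the role of monogenicity. First, the quartic ring $R_4$ attached to $\mathcal{V}=(a,b,c,d,e)$ is \emph{not} $\Z[\theta]$ for $\theta=a\zeta$ a root of the monicized polynomial $X^4+bX^3+acX^2+a^2dX+a^3e$; that subring has index $|a|^{3}$ in the correct ring and discriminant $a^{6}\operatorname{disc}(\mathcal{V})$, so your forward map is neither discriminant preserving nor well defined on $\text{GL}_2(\Z)$-classes (the leading coefficient $a$ is not an invariant of the class and cannot in general be normalized to $1$). The correct $R_4$ carries the basis $\left\{ 1,\ a\zeta,\ a\zeta^2+b\zeta,\ a\zeta^3+b\zeta^2+c\zeta \right\}$ (compare \eqref{ibsn} with $f=1$) and is in general not monogenic: the whole point of the theorem, in contrast with Theorem \ref{Bhthm}, is that the monogenicity hypothesis sits on the \emph{resolvent} $R_3$, not on $R_4$. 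Consequently your inverse map collapses at the step ``monogenicity of $R_3$ forces $R_4$ to be generated by a single element, hence $\{1,\theta,\theta^2,\theta^3\}$ is a $\Z$-basis''; that implication is false. Second, $R_3$ is not $\Z[j]$ with $j=-\mathcal{F}(1,0)$: the quantities $i$ and $j$ are rational integers (invariants satisfying \eqref{ijinvariants}), so $\Z[j]=\Z$. The resolvent ring is the cubic ring attached to the binary cubic $\mathcal{C}$ of \eqref{resolvform} by the Levi--Delone--Faddeev correspondence; it is monogenic precisely because $\mathcal{C}$ has leading coefficient $1$, i.e.\ $R_3=\Z[\delta]$ with $\mathcal{C}(\delta,1)=0$.

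For comparison, the paper does not reprove this theorem (it is quoted from Wood \cite{Wood4}) but records the construction that does work: send $\mathcal{V}$ to the pair of ternary quadratic forms $\left( \mathcal{Q}_A, \mathcal{Q}_B \right) = \left( xy-z^2,\ ex^2+ay^2+byz+dxz+cz^2 \right)$ and apply Bhargava's bijection (Theorem \ref{Bhthm}) to obtain $\left( R_4, R_3 \right)$; the resolvent is parametrized by $4\det(Ax+By)=\mathcal{C}(x,y)$, which is monic in $x$, whence $R_3$ is monogenic, and conversely a quartic ring whose monogenic resolvent is parametrized by $(1,B,C,D)$ determines a binary quartic form with that cubic resolvent form. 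If you want to salvage your write-up, replace your $R_4$ and $R_3$ by these objects and delete the claim that $R_4$ admits a power basis.
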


In Wood's construction, the binary quartic form $\mathcal{V} = (a,b,c,d,e)$ was sent to the pair of symmetric $3 \times 3$ half-integral matrices 
\begin{align*}
A & = \frac{1}{2} \left(
\begin{array}{ccc}
 0 & 1 & 0 \\
 1 & 0 & 0 \\
 0 & 0 & -2 \\
\end{array}
\right) , & B & = \frac{1}{2} \left(
\begin{array}{ccc}
 2 e & 0 & d \\
 0 & 2 a & b \\
 d & b & 2 c \\
\end{array}
\right) ,
\end{align*}
that corresponds to the pair of ternary quadratic forms 
\begin{align*}
\mathcal{Q}_A & = x y - z^2 , & \mathcal{Q}_B & = e x^2 + a y^2 + b y z + d x z + c z^2 .
\end{align*} 
The pair of matrices $(A, B)$ was then mapped to the binary cubic form 
\begin{equation*}
4 \det \left( A x + B y \right) = \mathcal{C}(x, y) , 
\end{equation*}
where $\mathcal{C}$ is given by \eqref{resolvform}. The roots $\zeta_1, \zeta_2, \zeta_3, \zeta_4 $ of $\mathcal{V}(x, 1)$ and the roots $\delta_1, \delta_2, \delta_3 $ of $\mathcal{C}(x, 1)$ are related. We have  
\begin{align}\label{ptrnfdg}
\delta_1 & = a \left( \zeta_1 \zeta_2 + \zeta_3 \zeta_4 \right) , & \delta_2 & = a \left( \zeta_1 \zeta_3 + \zeta_2 \zeta_4 \right) , & \delta_3 & = a \left( \zeta_1 \zeta_4 + \zeta_2 \zeta_3 \right) . 
\end{align}
In summary, the binary quartic form $\mathcal{V} $ is sent to the pair of ternary quadratic forms $\left( \mathcal{Q}_A, \mathcal{Q}_B \right) $ and the associated cubic order is monogenic. To get the quartic order, we map $\left( \mathcal{Q}_A, \mathcal{Q}_B \right)$ to the quartic order by Bhargava's results that we have yet to state in Theorem \ref{Bhthm} below.

\begin{example}\label{extwopointone}
Let $\mathcal{V}(x, y) = (7, 5, -4, 2, 6)$. The cubic resolvent form of $\mathcal{V}$ is the binary cubic form $\mathcal{C} = (1, 4, -158, -850)$ of discriminant $\Delta = 6556372$. Now $\mathcal{C}$ parametrizes an order in a totally real cubic number field by the Levi correspondence. Since the leading coefficient of $\mathcal{C}$ is equal to $1$, this means that the order can be generated by a single element $\delta $. Numerical approximations of the roots of $\mathcal{C}(x, 1)$ are given by 
\begin{align*}
\delta & = 13.0679, & \delta' & = -11.3241, & \delta'' & = -5.7438.
\end{align*}
An integral basis for the order $\mathcal{O}$ parametrized by $\mathcal{C}$ is given by $\left\{ 1, \delta , \delta^2 + 4 \delta \right\}$, or the power basis $\left\{ 1, \delta , \delta^2 \right\}$. The roots of $\mathcal{V}(x, 1)$ are the complex numbers approximated by
\begin{align*}
\zeta & = -0.9794 - 0.3049 i, & \zeta' & = -0.9794 + 0.3049 i, \\
\zeta'' & = 0.6223 - 0.6535 i, & \zeta''' & = 0.6223 + 0.6535 i .
\end{align*}
It is easy to see that \eqref{ptrnfdg} is satisfied since 
\begin{align*}
\delta & = 7 \left( \zeta \zeta' + \zeta'' \zeta''' \right) , & \delta' & = 7 \left( \zeta \zeta'' + \zeta' \zeta''' \right) , & \delta'' & = 7 \left( \zeta \zeta''' + \zeta' \zeta'' \right) .
\end{align*}
\end{example}

Bhargava \cite{Bhargavacomp4} proved that pairs of ternary quadratic forms produce quartic rings in the following result.  
\begin{theorem}\label{Bhthm}
There is a discriminant preserving bijection between the $\text{GL}_3(\Z ) \times \text{GL}_2(\Z )$-class of pairs of ternary quadratic forms and the isomorphism class of the pair $\left( R_4, R_3 \right)$, where $R_4$ is a quartic ring and $R_3$ is a cubic resolvent ring.
\end{theorem}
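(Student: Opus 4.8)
We sketch the strategy behind this theorem; the plan is to realise the bijection through a \emph{cubic resolvent map}: one exhibits mutually inverse constructions $(A,B)\mapsto(R_4,R_3)$ and $(R_4,R_3)\mapsto(A,B)$ on classes, and then checks that the discriminant is respected. The guiding picture, already visible in \eqref{ptrnfdg}, is that a pair of ternary quadratic forms is nothing but a homogeneous quadratic map $\Z^3\to\Z^2$, to be identified with the resolvent map $R_4/\Z\cdot 1\to R_3/\Z\cdot 1$ which, in the field case, sends $x$ to the element with conjugates $x^{(i)}x^{(j)}+x^{(k)}x^{(l)}$.

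For the forward direction, given $(A,B)$ one takes $R_3$ to be the cubic ring attached by the Delone--Faddeev--Levi correspondence to the binary cubic form $\mathcal{C}=4\det(Ax+By)$; this is well defined on $\text{GL}_3(\Z)\times\text{GL}_2(\Z)$-classes since $\det(x\,G_3^{\top}AG_3+y\,G_3^{\top}BG_3)=(\det G_3)^2\det(xA+yB)=\det(xA+yB)$ for $G_3\in\text{GL}_3(\Z)$, while $4\det((pA+qB)x+(rA+sB)y)=4\det(Ax+By)\circ G_2^{\top}$ lies in the $\text{GL}_2(\Z)$-orbit of $\mathcal{C}$. For the quartic ring one sets $R_4=\Z\oplus\Z^3$ and defines its multiplication by structure constants on $\Z^3=R_4/\Z\cdot 1$, polynomial in the twelve coefficients $a_{ij},b_{ij}$ and chosen so that $v\mapsto(\mathcal{Q}_A(v),\mathcal{Q}_B(v))$ becomes the resolvent map into $R_3$ and $\mathcal{C}$ becomes the binary cubic form of $R_3$. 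When $R_3$ is monogenic this specializes to Wood's construction recorded in Theorem~\ref{woodone}, with $\mathcal{Q}_A=xy-z^2$. The one substantive point is that the multiplication so defined is associative.

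For the inverse direction, given $(R_4,R_3)$ one uses the structural fact that $R_4$ carries a canonical quadratic (degree two) resolvent map to $R_3$, pinned down over $\Q$ by the prescribed characteristic polynomials and extended integrally, using the canonical orientation $\wedge^3(R_4/\Z\cdot 1)\cong\Z$, as in \cite{Bhargavacomp4}. Reduction modulo $\Z\cdot 1$ gives a homogeneous quadratic map $R_4/\Z\cdot 1\to R_3/\Z\cdot 1$ between free $\Z$-modules of ranks $3$ and $2$, which in chosen bases is an ordered pair of ternary quadratic forms $(\mathcal{Q}_A,\mathcal{Q}_B)$; this recovery is exactly what the Hermite normal form algorithm of Section~\ref{getpair} performs. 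A change of basis of $R_4/\Z\cdot 1$ by $G_3$ precomposes the map with $G_3$ and so replaces $(A,B)$ by $(G_3^{\top}AG_3,G_3^{\top}BG_3)$; a change of basis of $R_3/\Z\cdot 1$ by $G_2$ with entries $p,q,r,s$ postcomposes it and replaces $(A,B)$ by $(pA+qB,rA+sB)$; and integer shifts of a basis element of $R_4$ are invisible modulo $\Z\cdot 1$. Hence the $\text{GL}_3(\Z)\times\text{GL}_2(\Z)$-class of the pair depends only on the isomorphism class of $(R_4,R_3)$, and these are the only ambiguities. That the two constructions are mutually inverse then reduces to matching structure constants in a normalized basis of $R_4$ (and of $R_3$) against the formulas of the forward map, together with the fact that every quartic ring admits a cubic resolvent of the stated form; and the bijection is discriminant preserving because $\text{disc}(\mathcal{C})=\text{disc}(R_3)$ by Delone--Faddeev and $\text{disc}(R_3)=\text{disc}(R_4)$ by the definition of a cubic resolvent ring, which lifts to rings the classical fact that a quartic form and its resolvent cubic have equal discriminant.

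The hard part is, just as in Bhargava's original argument \cite{Bhargavacomp4}, establishing associativity of the multiplication placed on $R_4=\Z\oplus\Z^3$ by the explicit structure constants --- equivalently, on the other side, establishing that an arbitrary quartic ring possesses a cubic resolvent ring carrying a quadratic resolvent map. Both come down to a finite but delicate family of polynomial identities in the $a_{ij},b_{ij}$ (respectively in the normalized structure constants of $R_4$), namely those forced by demanding that $4\det(Ax+By)$ be the resolvent cubic form. One would try to make this tractable either by exploiting $\text{GL}_3(\Z)\times\text{GL}_2(\Z)$-equivariance to reduce to a normal form before checking the identities, or by noting that it is enough to verify these polynomial identities on the dense open locus where the discriminant is nonzero, over which $R_4\otimes\Q$ is a quartic \'etale algebra and the conjugate description underlying \eqref{ptrnfdg} makes them transparent.
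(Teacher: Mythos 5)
The first thing to say is that the paper does not prove Theorem \ref{Bhthm} at all: it is stated as Bhargava's result and attributed to \cite{Bhargavacomp4}, so there is no internal proof to measure your argument against. The closest the paper comes to substantiating the two directions of the bijection is (i) the explicit forward map of Section \ref{mapforms}, namely the formulas for the structure constants $c_{ij}^{(k)}$ as $2\times 2$ minors of the coefficient matrices together with the verification that the resulting arithmetic matrices \eqref{quartpars} commute and that the $c_{ij}^{(0)}$ are then forced, and (ii) the explicit inverse on the level of structure constants given by Lemma \ref{nineeqns} and Algorithm \ref{secondalgparams} in Section \ref{getpair}. Your sketch is consistent with, and correctly situates, both of these ingredients.

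Judged on its own terms, however, your proposal is an architecture for Bhargava's proof rather than a proof. You correctly identify the two load-bearing steps and then explicitly defer both: associativity of the multiplication defined on $\Z\oplus\Z^3$ by the structure constants (equivalently, that the fifteen relations \eqref{tonmgdf} together with the derived $c_{ij}^{(0)}$ really do define a ring), and the existence of a cubic resolvent for an arbitrary quartic ring, which is itself a nontrivial theorem of \cite{Bhargavacomp4} and not merely a bookkeeping fact. The suggested remedies (reduce to a normal form by equivariance, or verify the polynomial identities on the nondegenerate locus where $R_4\otimes\Q$ is \'etale and extend by Zariski density) are the right ideas and are essentially how Bhargava proceeds, but as written they are proposals for proofs, not proofs. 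Two smaller cautions: the bijection is with isomorphism classes of \emph{pairs} $(R_4,R_3)$ precisely because a quartic ring need not have a unique cubic resolvent, so any argument for the inverse direction must track the resolvent as part of the data rather than treating it as canonically attached to $R_4$; and the identification you invoke to pin down the integral resolvent map is via exterior powers relating $\wedge^4 R_4$ and $\wedge^3 R_3$, not an orientation of $\wedge^3(R_4/\Z\cdot 1)$ alone. In short: the outline is faithful to the known proof, but the statement remains unproved by what you have written.
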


Theorem \ref{Bhthm} states that classes of ternary quadratic forms parametrize all quartic rings whereas Theorem \ref{woodone} shows that binary quartic forms only parametrize those quartic rings with monogenic cubic resolvent rings. 

To prove Theorem \ref{woodone}, Wood \cite{Wood4} gave a multiplication table for the quartic ring parametrized by the binary quartic form $\mathcal{V}$, so that the map from $\mathcal{V}$ to $R_4$ is explicit. The monogenic cubic resolvent ring is the order parametrized by $- \mathcal{C}$ with $\mathcal{C}$ in \eqref{resolvform}, also easy to provide a multiplication table for. Conversely, if $R_4$ is a quartic ring with a monogenic cubic resolvent ring parametrized by $(1, B, C, D)$, then we can find a binary quartic form $\mathcal{V}$ that parametrizes $R_4$ with cubic resolvent form $(1, B, C, D)$. However again, to be clear, binary quartic forms do not parametrize all quartic rings as the following example illustrates. 
\begin{example}\label{exdfgopa}
Consider the quartic number field $\EE = \Q (\zeta )$ of discriminant $\Delta = 225$, where $\zeta $ is a root of $\mathcal{V}(x, 1)$, where $\mathcal{V} = (4, -6, 5, -3, 1)$ of discriminant $2^2 \Delta $. The ring of integers $\mathcal{O}_{\EE }$ of $\EE$ is generated by the integral basis $$\left\{ 1, \omega_1, \omega_2, \omega_3 \right\} = \left\{ 1, 2 \zeta , 4 \zeta^2 - 6 \zeta , 4 \zeta^3 - 6 \zeta^2 + 5 \zeta \right\} .$$ $\mathcal{O}_{\EE }$ is a quartic ring of discriminant $225$ but there is no binary quartic form of discriminant $225$ because the elliptic curve $y^2 = 4 x^3 - 6075$ has no integral points \cite{MathStackEx} and hence \eqref{ijinvariants} has no solution so the assumption that a binary quartic form of discriminant $225$ is contradicted. We see then, that $\mathcal{O}_{\EE }$ is not parametrized by any binary quartic form. Instead, according to Theorem \ref{Bhthm}, $\mathcal{O}_{\EE }$ is parametrized by a pair of ternary quadratic forms. An arithmetic matrix for $\mathcal{O}_{\EE }$ is given by 
\begin{equation*}
N_{\EE }^{(\alpha )} = \left(
\begin{array}{cccc}
 u & -2 z & 6 z-4 y & -2 x+6 y-5 z \\
 x & u+3 x-5 y+3 z & -5 x+6 y-2 z & 3 x-2 y \\
 y & x & u-5 y+3 z & 3 y-z \\
 z & 2 y & 2 x-6 y & u+3 z \\
\end{array}
\right) ,
\end{equation*}
which provides a convenient means of adding and multiplying the elements $\alpha = u + x \omega_1 + y \omega_2 + z \omega_3 \in \mathcal{O}$ using matrix addition and multiplication, where $u, x, y, z \in \Z$. Providing an arithmetic matrix is equivalent to giving a multiplication table for $\mathcal{O}_{\EE }$. 
\end{example}

Bhargava \cite{Bhargavacomp4} noted that even if we have multiplication tables for the quartic ring and its cubic resolvent ring, it is not obvious how we should obtain the corresponding pair of ternary quadratic forms. Given the pair of $3 \times 3$ symmetric matrices 
\begin{align}\label{pairtern}
A & = \frac{1}{2} \left(
\begin{array}{ccc}
 2 a_{11} & a_{12} & a_{13} \\
 a_{12} & 2 a_{22} & a_{23} \\
 a_{13} & a_{23} & 2 a_{33} \\
\end{array}
\right) , & B & = \frac{1}{2} \left(
\begin{array}{ccc}
 2 b_{11} & b_{12} & b_{13} \\
 b_{12} & 2 b_{22} & b_{23} \\
 b_{13} & b_{23} & 2 b_{33} \\
\end{array}
\right) ,
\end{align}
a multiplication table for the quartic ring $\mathcal{O}$ with normalized basis $\left\{ 1, \omega_1, \omega_2, \omega_3 \right\}$ was written. This was given in terms of coefficients $c_{ij}^{(k)}$ of the basis generators $\omega_k$. 
\begin{equation}\label{flvdfqasd}
\omega_i \omega_j = c_{ij}^{(0)} + c_{ij}^{(1)} \omega_1 + c_{ij}^{(2)} \omega_2 + c_{ij}^{(3)} \omega_3 , 
\end{equation}
where normalized means that $c_{12}^{(1)} = c_{12}^{(2)} = c_{13}^{(1)} = 0$. The identity \eqref{flvdfqasd} provides the following multiplication table.
\begin{eqnarray*}
\omega_1^2 & = & c_{11}^{(0)} + c_{11}^{(1)} \omega_1 + c_{11}^{(2)} \omega_2 + c_{11}^{(3)} \omega_3 , \\
\omega_1 \omega_2 & = & c_{12}^{(0)} + c_{12}^{(1)} \omega_1 + c_{12}^{(2)} \omega_2 + c_{12}^{(3)} \omega_3 , \\
\omega_1 \omega_3 & = & c_{13}^{(0)} + c_{13}^{(1)} \omega_1 + c_{13}^{(2)} \omega_2 + c_{13}^{(3)} \omega_3 , \\
\omega_2^2 & = & c_{22}^{(0)} + c_{22}^{(1)} \omega_1 + c_{22}^{(2)} \omega_2 + c_{22}^{(3)} \omega_3 , \\
\omega_2 \omega_3 & = & c_{23}^{(0)} + c_{23}^{(1)} \omega_1 + c_{23}^{(2)} \omega_2 + c_{23}^{(3)} \omega_3 , \\
\omega_3^2 & = & c_{33}^{(0)} + c_{33}^{(1)} \omega_1 + c_{33}^{(2)} \omega_2 + c_{33}^{(3)} \omega_3 . 
\end{eqnarray*}
From a multiplication table we can construct an arithmetic matrix $N_{\mathcal{O}}^{(\alpha )}$ corresponding to the quartic order $\mathcal{O}$ parametrized by the pair $(A, B)$, an approach introduced in \cite{hamblarithnf4, cfg4}. The matrix $N_{\mathcal{O}}^{(\alpha )}$ is given by 
\small 
\begin{equation}\label{quartpars}
\left(
\begin{array}{cccc}
 u & x c_{11}^{(0)} + y c_{12}^{(0)} + z c_{13}^{(0)} & x c_{12}^{(0)} + y c_{22}^{(0)} + z c_{23}^{(0)} & x c_{13}^{(0)} + y c_{23}^{(0)} + z c_{33}^{(0)} \\
 x & u + x c_{11}^{(1)} + y c_{12}^{(1)} + z c_{13}^{(1)} & x c_{12}^{(1)} + y c_{22}^{(1)} + z c_{23}^{(1)} & x c_{13}^{(1)} + y c_{23}^{(1)} + z c_{33}^{(1)} \\
 y &  x c_{11}^{(2)} + y c_{12}^{(2)} + z c_{13}^{(2)} & u + x c_{12}^{(2)} + y c_{22}^{(2)} + z c_{23}^{(2)} & x c_{13}^{(2)} + y c_{23}^{(2)} + z c_{33}^{(2)} \\
 z &  x c_{11}^{(3)} + y c_{12}^{(3)} + z c_{13}^{(3)} & u + x c_{12}^{(3)} + y c_{22}^{(3)} + z c_{23}^{(3)} & u + x c_{13}^{(3)} + y c_{23}^{(3)} + z c_{33}^{(3)} \\
\end{array}
\right) ,
\end{equation}
\normalsize 
where for $k > 0$ the eighteen $c_{ij}^{(k)}$ are given by 
\begin{align*}
c_{11}^{(1)} & = a_{13} b_{12} - a_{12} b_{13}  & c_{11}^{(2)} & = a_{11} b_{13} - a_{13} b_{11} , & c_{11}^{(3)} & = a_{12} b_{11} - a_{11} b_{12} , \\
             &   \ + a_{23} b_{11} - a_{11} b_{23} , & & & & \\
c_{12}^{(1)} & = 0 , & c_{12}^{(2)} & = 0 , & c_{12}^{(3)} & = a_{22} b_{11} - a_{11} b_{22} , \\
c_{13}^{(1)} & = 0 , & c_{13}^{(2)} & =  a_{11} b_{33} - a_{33} b_{11} , & c_{13}^{(3)} & = a_{23} b_{11} - a_{11} b_{23} , \\
c_{22}^{(1)} & = a_{23} b_{22} - a_{22} b_{23} , & c_{22}^{(2)} & = a_{12} b_{23} - a_{23} b_{12}  & c_{22}^{(3)} & = a_{22} b_{12} - a_{12} b_{22} , \\
             &                                   &              & \ + a_{22} b_{13} - a_{13} b_{22}  , & & \\ 
c_{23}^{(1)} & = a_{33} b_{22} - a_{22} b_{33} , & c_{23}^{(2)} & = a_{12} b_{33} - a_{33} b_{12} , & c_{23}^{(3)} & = a_{22} b_{13} - a_{13} b_{22} , \\
c_{33}^{(1)} & = a_{33} b_{23} - a_{23} b_{33} , & c_{33}^{(2)} & = a_{13} b_{33} - a_{33} b_{13} , & c_{33}^{(3)} & = a_{12} b_{33} - a_{33} b_{12} \\
             &                                   &              &                                 &              & \ + a_{23} b_{13} - a_{13} b_{23} .
\end{align*}
Let 
\begin{align*}
\alpha_1 & = u_1 + x_1 \omega_1 + y_1 \omega_2 + z_1 \omega_3 , & \alpha_2 & = u_2 + x_2 \omega_1 + y_2 \omega_2 + z_2 \omega_3 . 
\end{align*}
Next we solve the equation 
\begin{equation*}
N_{\mathcal{O}}^{\left( \alpha_1 \right) } N_{\mathcal{O}}^{\left( \alpha_2 \right) } - N_{\mathcal{O}}^{\left( \alpha_2 \right) } N_{\mathcal{O}}^{\left( \alpha_1 \right) } = [0]
\end{equation*}
for $c_{11}^{(0)}$, $c_{12}^{(0)}$, $c_{13}^{(0)}$, $c_{22}^{(0)}$, $c_{23}^{(0)}$, and $c_{33}^{(0)}$ and find that we must have 
\begin{eqnarray*}
c_{11}^{(0)} & = & \left( c_{12}^{(2)} - c_{11}^{(1)} \right) c_{12}^{(2)} + c_{12}^{(3)} c_{13}^{(2)} + c_{11}^{(2)} \left( c_{12}^{(1)} - c_{22}^{(2)} \right) - c_{11}^{(3)} c_{23}^{(2)} , \\
c_{12}^{(0)} & = & c_{11}^{(2)} c_{22}^{(1)} + c_{11}^{(3)} c_{23}^{(1)} - c_{12}^{(1)} c_{12}^{(2)} - c_{12}^{(3)} c_{13}^{(1)} , \\
c_{13}^{(0)} & = & c_{11}^{(2)} c_{23}^{(1)} + c_{11}^{(3)} c_{33}^{(1)} - c_{12}^{(1)} c_{13}^{(2)} - c_{13}^{(1)} c_{13}^{(3)} , \\
c_{22}^{(0)} & = & \left( c_{12}^{(1)} + c_{12}^{(2)} - c_{11}^{(1)} \right) c_{22}^{(1)}  + c_{12}^{(3)} c_{23}^{(1)} - c_{12}^{(1)} c_{22}^{(2)} - c_{13}^{(1)} c_{22}^{(3)} , \\
c_{23}^{(0)} & = &  c_{12}^{(1)} \left( c_{13}^{(1)} - c_{23}^{(2)} \right) - c_{11}^{(1)} c_{23}^{(1)} + c_{12}^{(2)} c_{23}^{(1)} - c_{13}^{(1)} c_{23}^{(3)} + c_{12}^{(3)} c_{33}^{(1)} , \\
c_{33}^{(0)} & = & \left( c_{13}^{(3)} - c_{11}^{(1)} \right) c_{33}^{(1)} + c_{13}^{(1)} \left( c_{13}^{
(1)} - c_{33}^{(3)} \right) + c_{13}^{(2)} c_{23}^{(1)} - c_{12}^{(1)} c_{33}^{(2)} .
\end{eqnarray*}
Since the arithmetic matrix $N_{\mathcal{O}}^{(\alpha )}$ indeed commutes, we see that the pair $(A, B)$ given by \eqref{pairtern} produces a ring with arithmetic matrix \eqref{quartpars}. 

If the $a_{ij}$ and $b_{ij}$ are rational integers and at least one of the minimum polynomials $f_1(x)$, $f_2(x)$, and $f_3(x)$ of $\omega_1$, $\omega_2$, and $\omega_3$ given by the identity
\begin{equation}\label{minpolydfg}
f_j(x) = \det \left( N_{\mathcal{O}}^{ \left( x - \omega_j \right) } \right) 
\end{equation}
has  degree equal to $4$, then $\left\{ 1, \omega_1 , \omega_2, \omega_3 \right\}$ is an integral basis for an order $\mathcal{O}$ of a quartic number field $\EE$. These polynomials are monic and hence $\omega_1 , \omega_2, \omega_3$ are algebraic integers. It can be shown that the polynomials $f_2(x)$ and $f_3(x)$ have the same discriminant, and the quotient of the discriminants of $f_1(x)$ and $f_2(x)$ is the square of a rational number.

\begin{example}\label{trfofs}
Theorem \ref{Bhthm} permits us to choose any twelve rational integers and obtain an arithmetic matrix for a ring. Of course some choices will not lead to genuine orders of quartic fields. Taking integers from the collection\\ $\{ 2,-5,3,3,1,3,0,-4,3,-3,1,-3 \}$, let 
\begin{align*}
A & = \frac{1}{2} \left(
\begin{array}{ccc}
 4 & -5 & 3 \\
 -5 & 6 & 1 \\
 3 & 1 & 6 \\
\end{array}
\right) , & B & = \frac{1}{2} \left(
\begin{array}{ccc}
 0 & -4 & 3 \\
 -4 & -6 & 1 \\
 3 & 1 & -6 \\
\end{array}
\right) . 
\end{align*} 
Computing the $24$ values of $c_{ij}^{(k)}$ for $1 \leq i < j \leq 3$ and $0 \leq k \leq 3$, we obtain the arithmetic matrix 
\begin{equation*}
N_{\mathcal{O}}^{(\alpha )} = \left(
\begin{array}{cccc}
 u & -354 x-36 y+48 z & -36 x+6 y+36 z & 48 x+36 y-18 z \\
 x & u+x & -6 y & 6 z \\
 y & 6 x-6 z & u+17 y+27 z & -6 x+27 y-18 z \\
 z & 8 x+6 y-2 z & 6 x-27 y+18 z & u-2 x+18 y+27 z \\
\end{array}
\right) ,  
\end{equation*}
which gives the ring structure of $\mathcal{O}$ parametrized by the pair of ternary quadratic forms 
\begin{align*}
\mathcal{Q}_1 & = 2 x^2 - 5 x y + 3 x z + 3 y^2 + y z + 3 z^2 , & \mathcal{Q}_2 & =  - 4 x y + 3 x z - 3 y^2 + y z - 3 z^2 . 
\end{align*}
The binary cubic form 
\begin{equation*}
\mathcal{C}(x, y) = 4 \det (A x + B y) = - 47 x^3 -262 x^2 y + 130 x y^2 + 63 y^3 
\end{equation*}
has discriminant $7683877869$. Approximations of the roots of $\mathcal{C}(x, 1)$ are given by
\begin{align*}
\delta_1 & = -1.5594, & \delta_2 & = -0.3039, & \delta_3 & = 4.8878.
\end{align*}
We denote the generators of the integral basis of $\mathcal{O}$ by the elements of $\left\{ 1, \omega_1, \omega_2, \omega_3 \right\}$. Minimum polynomials $f_j(x)$ of the $\omega_j$ can be found using the identity \eqref{minpolydfg}. We have 
\begin{eqnarray*}
f_1(x) & = & x^4 + x^3 + 388 x^2 + 504 x + 9720 , \\
f_2(x) & = & x^4 - 35 x^3 + 1029 x^2 + 1836 x + 5184 , \\
f_3(x) & = & x^4 - 54 x^3 + 1083 x^2 + 198 x + 9072 .
\end{eqnarray*}
Next we give numerical approximations of the roots $\omega_j^{(k)}$ of the $f_j(x)$ sorted so that their products agree with multiplication in $\mathcal{O}$ determined by multiplying the matrices $N_{\mathcal{O}}^{(\alpha )}$. We put these roots in the following matrix 
\begin{eqnarray}\label{gammamatr}
\Gamma_{\mathcal{O}} & = & \left(
\begin{array}{cccc}
 1 & \omega_1 & \omega_2 & \omega_3 \\
 1 & \omega_1^{(1)} & \omega_2^{(1)} & \omega_3^{(1)} \\
 1 & \omega_1^{(2)} & \omega_2^{(2)} & \omega_3^{(2)} \\
 1 & \omega_1^{(3)} & \omega_2^{(3)} & \omega_3^{(3)} \\
\end{array}
\right) , \\
\nonumber & \approx & \left(
\begin{array}{cccc}
 1 & -0.71-5.13 i & 18.4+27.4 i & 27.3-19.0 i \\
 1 & -0.71+5.13 i & 18.4-27.4 i & 27.3+19.0 i \\
 1 & 0.2-19.0 i & -0.92-1.97 i & -0.29+2.85 i \\
 1 & 0.2+19.0 i & -0.92+1.97 i & -0.29-2.85 i \\
\end{array}
\right) .
\end{eqnarray} 
Let 
\begin{align*}
\alpha & = u + x \omega_{1} + y \omega _{2} + z \omega_{3} , & \alpha' & = u + x \omega_{1}^{(1)} + y \omega_{2}^{(1)} + z \omega_{3}^{(1)} , \\
\alpha'' & = u + x \omega_{1}^{(2)} + y \omega_{2}^{(2)} + z \omega_{3}^{(2)} , & \alpha''' & = u + x \omega_{1}^{(3)} + y \omega_{2}^{(3)} + z \omega_{3}^{(3)} ,
\end{align*}
and 
\begin{equation*}
\Theta^{(\alpha )} = \left(
\begin{array}{cccc}
 \alpha & 0 & 0 & 0 \\
 0 & \alpha' & 0 & 0 \\
 0 & 0 & \alpha'' & 0 \\
 0 & 0 & 0 & \alpha''' \\
\end{array}
\right) .
\end{equation*}
Then we have 
\begin{equation}\label{diagform}
\left( \Gamma_{\mathcal{O}} \right)^{-1} \Theta^{(\alpha )} \Gamma_{\mathcal{O}} = N_{\mathcal{O}}^{(\alpha )}. 
\end{equation}
Notice that $\Delta = \det \left( \Gamma_{\mathcal{O}} \right)^2 = 7683877869$, which is square-free, and that this is equal to the discriminant of $\mathcal{C}(x, y)$. The polynomials $f_1(x)$, $f_2(x)$, and $f_3(x)$ have discriminants $\Delta \cdot 504^2$, $\Delta \cdot 3132^2$, and $\Delta \cdot 3132^2$. In the case of this example the pair $(A, B)$ parametrizes the ring of integers of a quartic field of discriminant $7683877869$. 
\end{example}

To understand that all bases of orders of quartic fields can be normalized, we will consider how a change of integral basis of an order of a quartic field affects the arithmetic matrix $N_{\mathcal{O}}^{(\alpha )}$. For this discussion we assume that we have two ordered integral bases of an order $\mathcal{O}$ given by $\mathcal{R} = \left\{ 1, \omega_1, \omega_2, \omega_3 \right\}$ and $\mathcal{S} = \left\{ 1, \rho_1, \rho_2, \rho_3 \right\}$, furthermore we call the matrix that we usually refer to as $\Gamma_{\mathcal{O}}$ instead $\Gamma_{\mathcal{R}}$ and $\Gamma_{\mathcal{S}}$ and rather than writing $N_{\mathcal{O}}^{(\alpha )}$ we will put $N_{\mathcal{R}}^{(\alpha )}$ and $N_{\mathcal{S}}^{(\alpha )}$. A change of basis is given by 
\begin{equation*}
\Gamma_{\mathcal{R}} = \Gamma_{\mathcal{S}} G,
\end{equation*}
where $G \in \text{GL}_4(\Z )$. The diagonalized expression for $N_{\mathcal{R}}^{(\alpha )}$ and $N_{\mathcal{S}}^{(\alpha )}$ can be used to show that 
\begin{equation}\label{frosdf}
N_{\mathcal{R}}^{(\alpha )} = G^{-1} N_{\mathcal{S}}^{(\alpha )} G .
\end{equation}
The pair of symmetric matrices $(A, B)$ gives rise to a normalized basis, however a change of basis by multiplication by $G$, given by \eqref{frosdf}, will produce an arithmetic matrix that does not necessarily correspond to a normalized basis. 

\section{Essential pairs}\label{essentpairs}

Under certain circumstances we can give a formula for an arithmetic matrix corresponding to a normalized basis of the maximal order of a quartic field of discriminant $\Delta $, and hence formulas the $c_{ij}^{(k)}$. We will call the pair $\left[ f , \ (a,b,c,d,e) \right]$ an {\em essential pair} if the discriminant of the irreducible binary quartic form $\mathcal{V}(x, y) = (a,b,c,d,e)$ is equal to $\Delta f^2$ for some rational integer $f$ such that $f^2$ divides $a$ and $f$ divides $b$, where $\Delta $ is the discriminant of a quartic field $\Q (\zeta )$ with $\mathcal{V}(\zeta , 1) = 0$. In \cite{hamblarithnf4} it was shown that if we have an essential pair, then it is easy to write a basis for the maximal order. We reproduce this result below, omitting details of the proof. 

\begin{theorem}\label{newbasthm}
Let 
\begin{equation}\label{eqfpolt}
p(x) = a x^4 + b x^3 + c x^2 + d x + e 
\end{equation}
be an irreducible polynomial of degree $4$ in $\Z [x]$ such that $\left[ f , \ \mathcal{V}(x, y) \right]$ is an essential pair, where $p(x) = \mathcal{V}(x, 1)$, the discriminant of $\mathcal{V}$ is equal to $\Delta f^2$, $\Delta $ is the discriminant of a number field $\EE = \Q (\zeta )$ of degree $4$ over $\Q $, where $p(\zeta ) = 0$. Then an integral basis for the ring of integers of $\EE$ is given by $\left\{ \omega_0 , \omega_1 , \omega_2 , \omega_3 \right\}$, where 
\begin{align}\label{ibsn}
 \omega_0 & = 1, & \omega_1 & = \frac{a}{f} \zeta , & \omega_2 & = a \zeta^2 + b \zeta , & \omega_3 & = a \zeta^3 + b \zeta^2 + c \zeta .
\end{align}
\end{theorem}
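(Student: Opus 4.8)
The plan is to identify the lattice $L := \Z\omega_0 \oplus \Z\omega_1 \oplus \Z\omega_2 \oplus \Z\omega_3$ with the maximal order $\mathcal{O}_\EE$ by a discriminant-and-index computation, using as a yardstick the order $\Z[\theta]$ generated by $\theta := a\zeta$. First I would record that $\theta$ is an algebraic integer with minimal polynomial the monic quartic $q(x) = x^4 + b x^3 + ac\,x^2 + a^2 d\,x + a^3 e$ obtained from $p$ by clearing its leading coefficient; $q$ is irreducible because $p$ is, so $\Z[\theta]$ is an order of $\EE$ with $\Z$-basis $1,\theta,\theta^2,\theta^3$. From $\mathrm{disc}(\mathcal{V}) = a^6\prod_{i<j}(\zeta_i-\zeta_j)^2$ and $\mathrm{disc}(q) = \prod_{i<j}(a\zeta_i - a\zeta_j)^2 = a^{12}\prod_{i<j}(\zeta_i-\zeta_j)^2$ one gets $\mathrm{disc}(\Z[\theta]) = \mathrm{disc}(q) = a^6\,\mathrm{disc}(\mathcal{V}) = a^6\Delta f^2$, hence $[\mathcal{O}_\EE : \Z[\theta]]^2 = \mathrm{disc}(\Z[\theta])/\Delta = (a^3 f)^2$ and $[\mathcal{O}_\EE : \Z[\theta]] = \abs{a}^3\abs{f}$.

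Next I would express the powers of $\theta$ in the proposed basis: a short computation, using \eqref{ibsn} together with the defining relation of $\zeta$, gives $1 = \omega_0$, $\theta = f\omega_1$, $\theta^2 = -bf\,\omega_1 + a\,\omega_2$, and $\theta^3 = (b^2-ac)f\,\omega_1 - ab\,\omega_2 + a^2\,\omega_3$. Thus the matrix carrying $(\omega_0,\omega_1,\omega_2,\omega_3)$ to $(1,\theta,\theta^2,\theta^3)$ is integral and, after reordering, upper triangular with diagonal $1,f,a,a^2$; so $\Z[\theta]\subseteq L$ with index $\abs{a}^3\abs{f}$, and therefore $\mathrm{disc}(L) = \mathrm{disc}(\Z[\theta])/(a^3 f)^2 = \Delta$.

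It then suffices to show that $\omega_0,\omega_1,\omega_2,\omega_3$ are algebraic integers: granting this, $\Z[\theta]\subseteq L\subseteq\mathcal{O}_\EE$ and $\mathrm{disc}(L) = \Delta = \mathrm{disc}(\mathcal{O}_\EE)$ force $[\mathcal{O}_\EE : L] = 1$, i.e. $L = \mathcal{O}_\EE$; in particular $L$ is automatically closed under multiplication, so no multiplication table need be checked by hand. For $\omega_1 = \frac{a}{f}\zeta$ (and $\frac{a}{f}\in\Z$ since $f^2\mid a$), substituting $\zeta = \frac{f}{a}\omega_1$ into $p(\zeta)=0$ and clearing denominators yields
\begin{equation*}
\omega_1^4 + \frac{b}{f}\,\omega_1^3 + \frac{ca}{f^2}\,\omega_1^2 + \frac{da^2}{f^3}\,\omega_1 + \frac{ea^3}{f^4} = 0 ,
\end{equation*}
and the essential-pair hypotheses $f\mid b$, $f^2\mid a$ make all four coefficients integers (since $f^2\mid a$ gives $f^3\mid a^2$ and $f^4\mid a^3$), so $\omega_1\in\mathcal{O}_\EE$. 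The remaining elements $\omega_2 = \zeta(a\zeta + b)$ and $\omega_3 = \zeta(a\zeta^2 + b\zeta + c)$ -- the quartic analogues of the Nakagawa basis $\{1, a\delta, a\delta^2 + b\delta\}$ recalled in Section \ref{intro} -- are handled by showing their characteristic polynomials $\prod_i(x - a\zeta_i^2 - b\zeta_i)$ and $\prod_i(x - a\zeta_i^3 - b\zeta_i^2 - c\zeta_i)$ lie in $\Z[x]$, for instance by writing them as $a^{-k}\,\mathrm{Res}_y(p(y),\, x - yr(y))$ for suitable $r\in\Z[x]$ and $k$ and checking divisibility, or by quoting the corresponding statement from \cite{hamblarithnf4}.

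The step I expect to be the main obstacle is exactly this last one: the integrality of $\omega_2$ and $\omega_3$ does not collapse to a one-line divisibility the way it does for $\omega_1$, and a self-contained argument needs the resultant (equivalently, symmetric-function) bookkeeping indicated above, or careful tracking of which multiples of $\theta^2 + b\theta$ and $\theta^3 + b\theta^2 + ac\,\theta$ lie in $a\Z[\theta]$ and $a^2\Z[\theta]$. A secondary point to watch is the normalization of the discriminant of a binary quartic form, so that the identity $\mathrm{disc}(q) = a^6\,\mathrm{disc}(\mathcal{V})$ -- and hence the index $\abs{a}^3\abs{f}$ -- holds with the conventions fixed in Section \ref{mapforms}.
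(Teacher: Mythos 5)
Your proposal is correct and follows essentially the same route as the paper: your index computation $[\mathcal{O}_{\EE}:\Z[a\zeta]]=\abs{a}^3\abs{f}$ via the triangular change-of-basis matrix with diagonal $1,f,a,a^2$ is the paper's determinant factorization $\Gamma_{\EE}=\Xi\overline{A}$ with $\det\overline{A}=a^3/f$ in different clothing, and your monic polynomial certifying $\omega_1\in\mathcal{O}_{\EE}$ is exactly the one the paper writes down. The step you flag as the main obstacle, the integrality of $\omega_2$ and $\omega_3$, is precisely the step the paper also declines to prove and delegates to \cite{hamblarithnf4}.
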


We now sketch some main ideas of the proof. Let $\zeta_i $ be the roots of $p(x)$, where $\zeta_0 = \zeta $, 
\begin{align}\label{wolk}
\Xi & = \left(
\begin{array}{cccc}
 1 & \zeta &  \zeta^2 & \zeta^{3} \\
 1 & \zeta_{1} & \zeta_{1}^2 & \zeta_{1}^{3} \\
 1 & \zeta_{2} & \zeta_{2}^2 & \zeta_{2}^{3} \\
 1 & \zeta_{3} & \zeta_{3}^2 & \zeta_{3}^{3} \\
\end{array}
\right) , & \overline{A} & = \left(
\begin{array}{cccc}
 1 & 0 & 0 & 0  \\
 0 & a & b & c  \\
 0 & 0 & a & b  \\
 0 & 0 & 0 & a  \\
\end{array}
\right) \left(
\begin{array}{cccc}
 1 & 0 & 0 & 0  \\
 0 & f^{-1} & 0 & 0 \\
 0 & 0 & 1 & 0  \\
 0 & 0 & 0 & 1  \\
\end{array}
\right)  ,
\end{align}
and let $\Gamma_{\EE } = \left[ \kappa_{i-1} \left( \omega_{j-1} \right) \right]$, where $\kappa_{i-1}$ for $i = 1$ to $4$ are the embeddings of $\EE $ in $\C$, then we have $\Gamma_{\EE } = \Xi \overline{A} $. The entries of $\overline{A}$ are rational integers. Taking the square of the determinants of these matrices, 
\begin{equation}\label{discnew}
 \det \left( \Gamma_{\EE} \right)^2 = \frac{a^6}{f^2} \prod_{i < j} \left( \zeta_{i} - \zeta_{j} \right)^2 = \Delta . 
\end{equation}
To show that $\omega_1 $ is an algebraic integer, multiplying \eqref{eqfpolt} by the rational integer $\frac{a^{3}}{f^4}$, we see that $\omega_1$ is a root of the monic polynomial 
\begin{equation*}
X^4 + \frac{b}{f} X^{3} + \frac{a c}{f^2} X^{2} + \frac{a^2 d}{f^3} X + \frac{a^{3} e}{f^{4}} ,
\end{equation*}
with coefficients in $\Z $. Finally, it must be shown that the remaining $\omega_j$ are algebraic integers. We leave the remaining details to \cite{hamblarithnf4}. 

A simple translation of the formula for this basis provides a normalized basis with the following arithmetic matrix: 
\begin{equation}\label{arithnormal}
N_{\mathcal{O}}^{(\alpha )} = \left(
\begin{array}{cccc}
 u & -\frac{a c}{f^2} x - \frac{a d}{f} y -\frac{a e}{f} z & - \frac{a d}{f} x - b d y - a e y - b e z & - \frac{a e}{f} x - b e y \\
 x & u - \frac{b}{f} x & - d f y - e f z & - e f y \\
 y & \frac{a}{f^2} x & u + c y & - e z \\
 z & \frac{a}{f} y & \frac{a}{f} x + b y + c z & u + c y + d z \\
\end{array}
\right) .
\end{equation}
When we compare \eqref{arithnormal} with \eqref{quartpars}, we can easily obtain formulas for the $c_{ij}^{(k)}$. If an essential pair exists, then $(A, B)$ parametrizes $\mathcal{O}$, where 
\begin{align*}
A & = \frac{1}{2} \left(
\begin{array}{ccc}
 2 \frac{a}{f^2} & \frac{b}{f} & 0 \\
 \frac{b}{f} & 2 c & d \\
 0 & d & 2 e \\
\end{array}
\right) , & B & = \frac{1}{2} \left(
\begin{array}{ccc}
 0 & 0 & 1 \\
 0 & -2 f & 0 \\
 1 & 0 & 0 \\
\end{array}
\right) , 
\end{align*}
\begin{equation*}
4 \det (A x + B y) = - \frac{1}{f^2}\left( a d^2 + b^2 e - 4 a c e \right) x^3 + \frac{1}{f} (b d - 4 a e) x^2 y - c x y^2 + f y^3 . 
\end{equation*}
This is part of the method that we will later use to tabulate pairs of ternary quadratic forms that parametrize maximal orders of quartic fields. When they exist, and are surprisingly common, essential pairs are generally easy to find as the following result shows. It characterizes the quartic fields that have an essential pair, provides a test for whether they exist, and gives a simple method for obtaining them. 
\begin{theorem}\label{resolve}
Let $\EE$ be a quartic field of discriminant $\Delta $ with maximal order $\mathcal{O}$ parametrized by $(A, B)$ and let $S$ denote the cubic resolvent ring of $\mathcal{O}$ parametrized by the binary cubic form $4 \det (A x + B y)$, with integral basis $\left\{ 1, \rho_1, \rho_2 \right\}$. Statements (1), (2) and (3) are equivalent and (3) implies (4). 
\begin{enumerate}
\item There is an essential pair $[f, (a, b, c, d, e)]$ for the quartic field $\EE$.
\item There is a monic quartic polynomial $g(x) \in \Z [x]$ of discriminant $\Delta f^2$ with a root generating $\EE$ and an integer $t$ such that 
\small 
\begin{align}\label{thethree}
g(t) & \equiv 0 \pmod{f^2}, & g'(t) & \equiv 0 \pmod{f}, 
\end{align}
\normalsize 
where $g'(x)$ denotes the first derivative; $g(x)$ has a root in $\Z /f^2$ that is repeated in $\Z / f$. 
\item There is an element $\tau = \ell + m \rho_1 + n \rho_2$ of $S$ such that $S/ \Z [\tau ] $ is cyclic of order $f$ and $\mathcal{Q}_{m A + n B}(x, y, z)$ belongs to the same $\text{GL}_3(\Z)$-class as $x z - f y^2$. 
\item There is an ideal $\mathfrak{a} $ of $S$ such that $S/ \mathfrak{a}$ is isomorphic to $\Z / f$. 
\end{enumerate}
\end{theorem}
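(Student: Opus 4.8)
The plan is to prove $(2)\Rightarrow(1)$, $(1)\Rightarrow(3)$, $(3)\Rightarrow(1)$, $(1)\Rightarrow(2)$, and $(3)\Rightarrow(4)$. The engine is a dictionary relating an essential pair, a companion ternary form lying in the $\text{GL}_3(\Z)$-class of $xz-fy^2$, and the Delone--Faddeev/Levi description of the cubic resolvent ring $S$ attached to $4\det(Ax+By)$. For $(2)\Rightarrow(1)$, after replacing $g$ by $g(x+t)$ I may assume $g(0)\equiv0\pmod{f^2}$ and $g'(0)\equiv0\pmod f$, say $g(x)=x^4+b_1x^3+c_1x^2+fj\,x+f^2k$; then the reciprocal quartic form $\mathcal V:=(f^2k,\,fj,\,c_1,\,b_1,\,1)$ has $f^2\mid a$ and $f\mid b$, a root $\zeta$ of $\mathcal V(x,1)$ equals $1/\theta$ for $\theta$ a root of $g$ and hence generates $\EE$, the discriminant of a quartic polynomial with constant term $1$ equals that of its reciprocal so $\operatorname{disc}\mathcal V=\operatorname{disc}g=\Delta f^2$, and $\mathcal V$ is irreducible because $g$ is; thus $[f,\mathcal V]$ is an essential pair.

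For $(1)\Leftrightarrow(3)$ I would use the explicit pair. An essential pair gives, via the formulas following Theorem~\ref{newbasthm}, a pair $(A_0,B_0)$ with $\mathcal Q_{B_0}=xz-fy^2$ and with $4\det(A_0x+B_0y)$ the displayed binary cubic form, whose $y^3$-coefficient is $f$, parametrizing the maximal order $\mathcal O$; since a maximal quartic ring has a unique cubic resolvent ring, $(A_0,B_0)$ is $\text{GL}_3(\Z)\times\text{GL}_2(\Z)$-equivalent to $(A,B)$. Transporting $\mathcal Q_{B_0}=xz-fy^2$ through the equivalence exhibits the image of $B_0$ as $mA+nB$ for a coprime pair $(m,n)$ coming from a row of the $\text{GL}_2(\Z)$-matrix, so $\mathcal Q_{mA+nB}$ lies in the $\text{GL}_3(\Z)$-class of $xz-fy^2$; and the Delone--Faddeev description of the cubic ring of $F:=4\det(Ax+By)$ shows that for coprime $(m,n)$ the index $[S:\Z[\ell+m\rho_1+n\rho_2]]$ equals $|F(m,n)|$ with cyclic quotient, which here is $f$ because $f$ is the $y^3$-coefficient of $4\det(A_0x+B_0y)$; this produces $\tau$ as required. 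Conversely, given $(m,n)$ and $\tau$ as in (3), I complete $(m,n)$ to a $\text{GL}_2(\Z)$-matrix, transform $(A,B)$ so that its second component is $mA+nB$, apply a $\text{GL}_3(\Z)$-element carrying $\mathcal Q_{mA+nB}$ to $xz-fy^2$, and then apply the shear $\begin{pmatrix}1&t\\0&1\end{pmatrix}$ (which replaces the first form by its sum with $t$ times the second) to kill the $xz$-coefficient of the first form; the resulting pair then has the exact shape of $(A_0,B_0)$ for the quartic $\mathcal V:=(f^2\alpha,f\beta,\gamma,\delta,\epsilon)$ read off from the companion form, so $f^2\mid a$ and $f\mid b$ hold automatically. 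This pair still parametrizes the maximal order $\mathcal O$, a domain with $[\EE:\Q]=4$, which forces $\mathcal V(x,1)$ irreducible of degree $4$ with a root generating $\EE$; and $\operatorname{disc}\mathcal V=\Delta f^2$ follows by comparing Bhargava's discriminant preservation for the pair (Theorem~\ref{Bhthm}) with the substitution identity $4\det(A_0x+B_0y)=f^{-2}\,\mathcal C_{\mathcal V}(fy,x)$ relating that cubic form to the cubic resolvent form $\mathcal C_{\mathcal V}$ of $\mathcal V$. Hence $[f,\mathcal V]$ is an essential pair.

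Finally, $(1)\Rightarrow(2)$ and $(3)\Rightarrow(4)$. For the former, by the normalization just described I may take the essential pair in the shape $(A_0,B_0)$, where the companion form's $z^2$-coefficient is $e$; the stabilizer of $xz-fy^2$ in $\text{GL}_3(\Z)$, together with the reflection interchanging $x$ and $z$, moves this coefficient through the values $\mathcal V(w,1)$ ($w\in\Z$) and $a/f^2$, and I would use the $p$-maximality of the pair at the primes $p\mid f$ to force one of these to be a unit, after which the reciprocal of that essential pair is the desired monic $g$ (with $t=0$). For $(3)\Rightarrow(4)$, since $(3)\Rightarrow(1)$ the form $4\det(Ax+By)$ is $\text{GL}_2(\Z)$-equivalent to one whose $y^3$-coefficient is $f$, so its dehomogenization acquires the root $x=0$ modulo every prime dividing $f$; each such root yields a prime ideal of $S$ of residue degree $1$, and assembling these with the multiplicities prescribed by $f$ produces an ideal $\mathfrak a$ with $S/\mathfrak a\cong\Z/f$ --- the one delicate point being primes dividing $f$ to a power at least $2$, where one must verify, from the explicit shape of the essential-pair resolvent form and the divisibilities $f^2\mid a$, $f\mid b$, that the relevant prime is unramified. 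I expect the main obstacle to be the reduction inside $(1)\Rightarrow(2)$: proving that a quartic field admitting an essential pair admits a monogenic suborder of $\mathcal O$ of index exactly $f$ whose conductor lies in a power-basis direction (equivalently, an essential pair whose binary quartic has constant term $\pm1$).
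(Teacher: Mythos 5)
Your skeleton --- $(2)\Rightarrow(1)$ via translation plus reversal of coefficients, and $(1)\Leftrightarrow(3)$ via the explicit pair whose second form is $xz-fy^2$ and whose resolvent cubic has $y^3$-coefficient $f$, normalised by suitable elements of $\text{GL}_2(\Z)$ and $\text{GL}_3(\Z)$ --- coincides with the paper's argument, and those parts are sound. But two of your steps have genuine gaps. For $(1)\Rightarrow(2)$ you propose to produce a monic $g$ of discriminant exactly $\Delta f^2$ by moving the $z^2$-coefficient of the companion form through the orbit of the stabiliser of $xz-fy^2$ and invoking $p$-maximality to force a unit value; you flag this yourself as the main obstacle, and indeed no argument is given that a unit value is ever attained, so as written the implication is unproved. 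The paper takes a much more pedestrian route: from the essential pair it forms $h=\mathcal{V}\circ M^{-1}$ with $\gcd(r,f)=1$, finds $u$ with $h(u)\equiv 0\pmod{f^2}$ and $h'(u)\equiv 0\pmod{f}$, and then monicises by $X=Ax$, $g(X)=A^3h(x)$, $t=Au$, which preserves both congruences. (That rescaling multiplies the polynomial discriminant by $A^6$, so the paper's own construction only literally delivers the congruence half of (2); still, your route replaces an easy step by an unproved claim rather than repairing that.)

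The second gap is in your implication to (4). You factor $f$ into primes of $S$, assemble an ideal from degree-one primes, and worry about ramification at primes dividing $f$ to higher powers. That machinery is not available here: $S$ is the cubic resolvent \emph{ring} attached to $4\det(Ax+By)$, which may be non-maximal and need not even be a domain (the paper stresses that these binary cubic forms may be reducible over $\Q$), so unique factorisation of ideals and the ramified/unramified dichotomy cannot be invoked. The paper's argument is elementary and robust: once the index form is normalised to $(p,q,-c,f)$, the constant term of a product $\alpha_1\alpha_2$ in $S$ equals $u_1u_2-f\left(px_2y_1+px_1y_2+qy_1y_2\right)$, so the map $u+x\rho_1+y\rho_2\longmapsto u \bmod f$ is a surjective ring homomorphism $S\to\Z/f$ whose kernel is the required ideal $\mathfrak{a}$. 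You should replace your factorisation argument with this (or an equivalent) direct construction.
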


\begin{proof}
We prove the logical relationships between the items of the theorem in the following steps, where $\Longrightarrow $ denotes implies: $(2) \Longrightarrow (1)$, $(1) \Longrightarrow (2)$, $(1) \Longrightarrow (3)$, $(1) \Longrightarrow (4)$, $(3) \Longrightarrow (1)$.  

If there exists a monic quartic polynomial $g(x)$ with a root generating $\EE$ and an integer $t$ satisfying \eqref{thethree}, then constructing the binary quartic form $\mathcal{V}(x, y) = y^4 g(x/y)$, we have 
\begin{align*}
\mathcal{V}(t, 1) & \equiv 0 \pmod{f^2}, & \mathcal{V}_x(t, 1) & \equiv 0 \pmod{f}, & \mathcal{V}_y(t, 1) & \equiv 0 \pmod{f} .  
\end{align*} 
There exist $q, s \in \Z$ satisfying $s t - q = \pm 1$. Letting $M = \left(
\begin{array}{cc}
 t & q \\
 1 & s \\
\end{array}
\right) \ \in \text{GL}_2(\Z )$, and $(a,b,c,d,e) = \mathcal{V} \circ M$, it follows that $[f, (a,b,c,d,e) ]$ is an essential pair for $\EE$. We can even choose $q = 1$ and $s = 0$ to obtain the essential pair $$\left[ f, \left( g(t), g'(t), \frac{1}{2} g^{(2)}(t), \frac{1}{6} g^{(3)}(t), 1 \right) \right] .$$ Conversely, if $[f, (a,b,c,d,e) ]$ is an essential pair for $\EE$, then letting $p, q, r, s \in \Z$ satisfy $p s - q r = \pm 1$ with $\gcd(r, f) = 1$, $M = \left(
\begin{array}{cc}
 p & q \\
 r & s \\
\end{array}
\right)$, $\mathcal{V} = (a,b,c,d,e) \circ M^{-1}$, and we let $h(x) = \mathcal{V}(x, 1)$, we have 
\begin{align*}
a & = \mathcal{V}(p, r) \equiv 0 \pmod{f^2}, & b & = q \mathcal{V}_x(p, r) + s \mathcal{V}_y(p, r) \equiv 0 \pmod{f}, 
\end{align*}
and since 
\begin{equation*}
\left(
\begin{array}{cc}
 4 a & b \\
\end{array}
\right) = \left(
\begin{array}{cc}
 \mathcal{V}_x(p, r) & \mathcal{V}_y(p, r) \\
\end{array}
\right) M , 
\end{equation*}
we have 
\begin{align*}
\mathcal{V}(p, r) & \equiv 0 \pmod{f^2} , & \mathcal{V}_x(p, r) & \equiv 0 \pmod{f} .  
\end{align*}
Since $\gcd (r, f) = 1$, there exists $u \in \Z$ satisfying $h(u) \equiv 0 \pmod{f^2}$, $h'(u) \equiv 0 \pmod{f}$. Now letting $X = A x$ and $g(X) = A^3 h(x)$, where $A$ is the leading coefficient of $h(x)$, we obtain a monic polynomial $g(X)$ with a root generating $\EE$ since 
\begin{equation*}
 a x^4 + b x^3 + c x^2 + d x + e = (r x + s)^4 \mathcal{V} \left( \frac{p x + q}{r x + s} , 1 \right) .
\end{equation*}
Furthermore, letting $t = A u$, we see that $t$ satisfies \eqref{thethree}. We can assume with no loss of generality that there is an integer $t$ and an essential pair $[f, (a,b,c,d,e)]$ such that 
\begin{equation*}
(a,b,c,d,e) = \sum_{n = 0}^{4} \frac{1}{n ! } g^{(n)}(t) \ x^{4 - n} y^{n - 4} . 
\end{equation*}
By Taylor's theorem we must have 
\begin{equation*}
g( x ) = a + (x-t) \left( b + c (x-t) + d (x-t)^2 + e (x-t)^3 \right) . 
\end{equation*}
We see that $\EE$ has an essential pair if and only if there is a monic polynomial $g(x) \in \Z [x]$ that has a root generating $\EE$ and $g(x)$ has a root in $\Z /f^2$ that is repeated in $\Z / f$. 

Let $[f, (a,b,c,d,e)]$ be an essential pair for the quartic field $\EE$ with maximal order $\mathcal{O}$ and let 
\begin{align}\label{pandq}
p & = - \frac{1}{f^2}\left( a d^2 + b^2 e - 4 a c e \right) , & q & = \frac{1}{f} (b d - 4 a e) .  
\end{align} 
We know that $\mathcal{C}(x, y) = (p, q, -c, f)$ is an index form for the cubic resolvent ring $S$ of $\mathcal{O}$. Let $\delta $ be a root of $\mathcal{C}(x, 1)$, $\rho_1 = p \delta $, $\rho_2 = p \delta^2  + q \delta $. Then $\left\{ 1, \rho_1, \rho_2 \right\}$ is an integral basis for $S$ and we have arithmetic matrix 
\begin{equation*}
N_S^{(\alpha )} = \left(
\begin{array}{ccc}
 u & - f p y & - f p x - f q y \\
 x & u - q x + c y & c x - f y \\
 y & p x & u + c y \\
\end{array}
\right) ,
\end{equation*}
where $\alpha_j = u_j + x_j \rho_1 + y_j \rho_2$. Following a technique presented in \cite[pp. 19]{cfg4} using a different basis, we have 
\begin{equation*}
\left(
\begin{array}{ccc}
 1 & \alpha  & \alpha^2 \\
 1 & \alpha'  & \alpha'^2 \\
 1 & \alpha''  & \alpha''^2 \\
\end{array}
\right) = 
\left(
\begin{array}{ccc}
 1 & \rho_1  & \rho_2 \\
 1 & \rho_1'  & \rho_2' \\
 1 & \rho_1''  & \rho_2'' \\
\end{array}
\right) \left(
\begin{array}{ccc}
 1 & u & U \\
 0 & x & X \\
 0 & y & Y \\
\end{array}
\right) , 
\end{equation*}
where $\alpha^2 = U + X \rho_1 + Y \rho_2$ and 
\begin{align*}
U & = u^2 - 2 p f x y - q f y^2 , & X & = 2 u x - q x^2 + 2 c x y - f y^2, & Y & = 2 u y + p x^2 + c y^2 .  
\end{align*}
Taking determinants and squaring the result, the discriminant of $\alpha $ is equal to $(x Y - y X)^2 \Delta$ and since $x Y - y X = \mathcal{C}(x, y)$, the index of $\alpha $ is equal to $\left| \mathcal{C}(x, y) \right|$. Now $\mathcal{C}(0, 1) = f$ so for all $u \in \Z$ the element $\tau = u + \rho_2 \in S$ has index $f$. In particular, since $f \rho_1 = - f q + c \rho_2 - \rho_2^2$, we see that the quotient module $S/ \Z \left[ \rho_2 \right]$ is cyclic of order $f$ with element $n \rho_1 + \Z \left[ \rho_2 \right]$, $n = 0, 1, \dots f - 1$. 

To show that (1) implies (4), notice that the product of $\alpha_1, \alpha_2 \in S$ is of the form 
\begin{equation*}
u_1 u_2 - f \left( p x_2 y_1 + p x_1 y_2 + q y_2 y_1 \right) + x_3 \rho_1 + y_3 \rho_2  
\end{equation*}
for some $x_3, y_3 \in \Z$ obtained from the left column of $N_S^{(\alpha_1 )} N_S^{(\alpha_2 )}$. We see that there is a surjective ring homomorphism 
\begin{align*}
\psi & : S \longrightarrow \Z / f , & u + x \rho_1 + y \rho_2 & \longmapsto u \pmod{f} . 
\end{align*}
The kernel of the map $\psi $ is the ideal $\mathfrak{a}$ of $S$ given by
\begin{equation*}
\mathfrak{a} = \left\{ u + x \rho_1 + y \rho_2 \in S \ : \ u, x, y \in \Z, \ u \equiv 0 \pmod{f} \right\} .
\end{equation*}
Hence we have the exact sequence of ring homomorphisms
\begin{equation*}
1 \longrightarrow \mathfrak{a} \stackrel{\phi }{\longrightarrow } S \stackrel{\psi }{\longrightarrow } \Z / f \longrightarrow 1 ,
\end{equation*}
where $\phi $ is the inclusion map. It follows that $S/ \mathfrak{a} $ is isomorphic to $\Z / f$. 

Conversely, to prove (3) implies (1), let $g(x) \in \Z [x]$ be a monic polynomial with a root generating $\EE$, of polynomial discriminant $\Delta f^2$, where $\Delta $ is the discriminant of $\EE$. Let $S$ be the cubic resolvent of the maximal order $\mathcal{O}$ of $\EE$ parametrized by $(A, B)$ and let $\mathcal{C}(x, y) = 4 \det (A x + B y)$ be a binary cubic form parametrizing $S$. Since there is an element $\tau $ of $S$ such that $S / \Z[\tau]$ is cyclic of order $f$, the index of $\tau $ is equal to $f$ and hence there exist $m, n \in \Z$ such that $\mathcal{C}(m, n) = f$. This means that we can find $M = \left(
\begin{array}{cc}
 v & m \\
 w & n \\
\end{array}
\right) \in \text{GL}_2(\Z )$ such that $\mathcal{C} \circ M = (p, q, -r, f) = 4 \det \left( \overline{A} x + \overline{B} y \right)$, for some $p, q, r \in \Z$, where $\overline{A} = v A + w B$, $\overline{B} = m A + n B$. Let 
\begin{align*}
\mathcal{Q}_{\overline{A}} & = \left( a_{11}, a_{12}, a_{13}, a_{22}, a_{23}, a_{33} \right) , & \mathcal{Q}_{\overline{B}} & = \left( b_{11}, b_{12}, b_{13}, b_{22}, b_{23}, b_{33} \right) , & G & = \left[ u_{ij} \right] . 
\end{align*} 
By the assumption of Item (3) we can solve the matrix equation 
\begin{equation*}
G^{\top} \left(
\begin{array}{ccc}
 0 & 0 & 1 \\
 0 & - 2 f & 0 \\
 1 & 0 & 0 \\
\end{array}
\right) G = 2 \overline{B} 
\end{equation*}
for $G \in \text{GL}_2(\Z )$. Let $H = G^{-1}$, and $A' = H^{\top} \overline{A} H $, $B' = H^{\top} \overline{B} H $.
Without loss of generality we can assume that $a_{13}' = 0$ since we can transform the binary cubic form $4 \det \left( A' x + B' y \right)$ by $\left(
\begin{array}{cc}
 1 & 0 \\
 k & 1 \\
\end{array}
\right)$, where $k \in \Z $ achieves $a_{13}' = 0$ without changing $B'$. We have 
\begin{equation*}
4 \det \left( A' x + B' y \right) = 4 \det \left( A' \right) x^3 + s x^2 y - r x y^2 + f y^3 .
\end{equation*}
where 
\begin{align*}
 s & = a_{13}'^2 f - 2 a_{22}' a_{13}' + a_{12}' a_{23}' - 4 a_{11}' a_{33}' f , & r & = a_{22}' - 2 a_{13}' f .
\end{align*}
Since $a_{13}' = 0$, we let 
\begin{equation*}
\left( a, b, c, d, e \right) = \left( a_{11}' f^2, a_{12}' f, a_{22}', a_{23}', a_{33}' \right) 
\end{equation*}
and find that $[f, (a, b, c, d, e)]$ is an essential pair for $\EE$. If $\zeta_1, \zeta_2, \zeta_3, \zeta_4$ are the roots of $a x^4 + b x^3 + c x^2 + d x + e$, where $[f, (a,b,c,d,e)]$ is an essential pair, then the roots of $f x^3 - c x^2 + q x + p$ are 
\begin{align*}
\theta_1 & = \frac{a}{f} \left( \zeta_1 \zeta_2 + \zeta_3 \zeta_4 \right) , & \theta_2 & = \frac{a}{f} \left( \zeta_1 \zeta_3 + \zeta_2 \zeta_4 \right) ,  & \theta_3 & = \frac{a}{f} \left( \zeta_1 \zeta_4 + \zeta_2 \zeta_3 \right) , 
\end{align*}
where $p, q$ are of the form given in \eqref{pandq}. 
\end{proof}

This result can make it easier to find a pair of ternary quadratic forms that parametrize the ring of integers of a quartic field using a number field database such as \cite{LMFDB}. For a given polynomial this requires $O \left( f^2 \right)$ operations to test. While solutions to the congruence \eqref{thethree} frequently exist, they do not always exist. If for a particular quartic field $\EE$ \eqref{thethree} has no solution, we can still calculate an integral basis for the ring of integers $\mathcal{O}_{\EE }$, normalize that basis, construct the matrix $\Gamma_{\EE }$, and then calculate an arithmetic matrix for $\mathcal{O}_{\EE }$ using \eqref{diagform} or other means. We note that all maximal orders of quartic fields of positive discriminant less than $576$ have an essential pair. Furthermore, of the quartic polynomials given in the LMFDB database \cite{LMFDB} generating fields of positive and respectively negative discriminant of absolute value less than $10^5$, $78.89 \%$ and $80.22 \%$ of those have a solution $t$ to \eqref{thethree}. 

The following example illustrates calculations in Theorem \ref{resolve}. 
\begin{example}
Consider the essential pair $[3, (9, -3, -5, 1, 1)]$ for the quartic field $\EE $ of discriminant $1197$. Let $M = \left(
\begin{array}{cc}
 3 & 2 \\
 4 & 3 \\
\end{array}
\right)$. We have $$\mathcal{V}(x, y) = (9, -3, -5, 1, 1) \circ M^{-1} = 397 x^4 - 1003 x^3 y + 949 x^2 y^2 - 399 x y^3 + 63 y^4 $$ and we let $h(x) = 397 x^4 - 1003 x^3 + 949 x^2 - 399 x + 63$. Since $4^{-1} \equiv 7 \pmod{f^2}$, and $3 \cdot 7 \equiv 3 \pmod{f^2}$, we let $u = 3$ and obtain $h(u) \equiv 0 \pmod{f^2}$ and $h'(u) \equiv 0 \pmod{f}$. The leading coefficient of $h(x)$ is $A = 397$ and $g(X) = 397^3 h(x) = X^4 - 1003 X^3 + 376753 X^2 - 62885991 X + 3941958699$, where $X = 397 x$. We let $t = 397 u = 1191$ and we have $g(t) \equiv 0 \pmod{f^2}$ and $g'(t) \equiv 0 \pmod{f}$. The factorizations of $g(X)$ in $\Z/ f^2$ and in $\Z/f$ are $X (X + 6) (X^2 + 8 X + 1)$ and $X^2 (X + 1)^2$.  
\end{example}

\begin{example}
In Example \ref{secondeg} we will find that the quartic field $\EE$ of discriminant $1424$ generated by a root of $g(x) = x^4 - 2 x^3 - x^2 + 2 x + 5$ has maximal order $\mathcal{O}$ parametrized by 
\begin{align*}
\mathcal{Q}_A & = (1, 0, 0, 1, -1, -1), & \mathcal{Q}_B & = (0, 0, 2, -1, 1, 3) . 
\end{align*}
with binary cubic form $\mathcal{C}(x, y) = 4 \det (A x + B y) = (-5, 18, -17, 4)$ of discriminant $1424$. The discriminant of $g(x)$ is equal to $1424 \cdot 4^2$. Let $\delta \approx 0.3566$ be a root of $\mathcal{C}(x, 1)$. The cubic resolvent ring $S$ of $\mathcal{O}$ has an element $\rho_2 = - 5 \delta^2 + 18 \delta $ of index $4$. However, the ternary quadratic forms $\mathcal{Q}_B = 2 x z - y^2 + y z + 3 z^2$ and $x z - 4 y^2$ belong to distinct $\text{GL}_3(\Z )$ classes. There are no solutions to the simultaneous congruence $g(t) \equiv 0 \mod{16}$, $g'(t) \equiv 0 \mod{4}$. 
\end{example}

\begin{example}
Let $\EE$ be the field of discriminant $1161$ generated by a root of $g(x) = x^4 - x^3 + 6 x^2 - x + 7$ with index $f = 4$. The simultaneous congruence $g(t) \equiv 0 \pmod{16}$, $g'(t) \equiv 4$ has solutions $t = 3, 7, 11, 15$ so $\EE$ has an essential pair. The maximal order $\mathcal{O}$ of $\EE$ is parametrized by $(A, B)$, where
\begin{align*}
A & = \frac{1}{2} \left(
\begin{array}{ccc}
 1260 & -690 & -668 \\
 -690 & 394 & 341 \\
 -668 & 341 & 390 \\
\end{array}
\right) , & B & = \frac{1}{2} \left(
\begin{array}{ccc}
 2022 & -1107 & -1072 \\
 -1107 & 632 & 547 \\
 -1072 & 547 & 626 \\
\end{array}
\right) . 
\end{align*}
We have 
\begin{equation*}
\mathcal{C}(x, y) = 4 \det (A x + B y) = -23138 x^3-111903 x^2 y-180399 x y^2-96940 y^3
\end{equation*}
and $\mathcal{C}(8, -5) = 4$. Let 
\begin{equation*}
\overline{B} = 8 A - 5 B = \frac{1}{2} \left(
\begin{array}{ccc}
 -30 & 15 & 16 \\
 15 & -8 & -7 \\
 16 & -7 & -10 \\
\end{array}
\right) . 
\end{equation*}
Solving $G^{\top } \left(
\begin{array}{ccc}
 0 & 0 & 1/2 \\
 0 & -4 & 0 \\
 1/2 & 0 & 0 \\
\end{array}
\right) G = \overline{B}$, we find that  
\begin{align*}
G & = \left(
\begin{array}{ccc}
 1 & 0 & -1 \\
 -2 & 1 & 1 \\
 1 & -1 & 1 \\
\end{array}
\right) , & H & = G^{-1} = \left(
\begin{array}{ccc}
 2 & 1 & 1 \\
 3 & 2 & 1 \\
 1 & 1 & 1 \\
\end{array}
\right) . 
\end{align*}
Solving the linear Diophantine equation $\det \left(
\begin{array}{cc}
 v & 8 \\
 w & -5 \\
\end{array}
\right)  = \pm 1$, we have a solution $v = -5$, $w = 3$. Hence we put
\begin{align*}
\overline{A} = - 5 A + 3 B = \frac{1}{2} \left(
\begin{array}{ccc}
 -234 & 129 & 124 \\
 129 & -74 & -64 \\
 124 & -64 & -72 \\
\end{array}
\right) .
\end{align*}
\begin{align*}
H^{\top} \overline{A} H & = \frac{1}{2} \left(
\begin{array}{ccc}
 -14 & -29 & -1 \\
 -29 & -94 & -11 \\
 -1 & -11 & -2 \\
\end{array}
\right) , & H^{\top} \overline{B} H & = \frac{1}{2} \left(
\begin{array}{ccc}
 0 & 0 & 1 \\
 0 & -8 & 0 \\
 1 & 0 & 0 \\
\end{array}
\right) . 
\end{align*}
Finally, we find $k \in \Z$ such that the entry of $\overline{A} + k \overline{B} $ in the first row and third column is equal to $0$. In this case $k = 1$ and we obtain 
\begin{align*}
A' & = \frac{1}{2} \left(
\begin{array}{ccc}
 -14 & -29 & 0 \\
 -29 & -102 & -11 \\
 0 & -11 & -2 \\
\end{array}
\right) , & B' & = \frac{1}{2} \left(
\begin{array}{ccc}
 0 & 0 & 1 \\
 0 & -8 & 0 \\
 1 & 0 & 0 \\
\end{array}
\right) , 
\end{align*}
which provides the essential pair $[4, (- 112, -116, -51 , -11, -1) ]$ for $\EE$. 
\end{example}

To conclude this section we show how changing the polynomials defining a given quartic field affects the existence of an essential pair. Recall that the index of a field $\EE$ is the greatest common divisor of the indices of the algebraic integers in $\EE$. The following result allows for a simple test for the existence of an essential pair for a quartic field. 
\begin{theorem}\label{sectmain}
Let $\EE = \Q \left( \zeta_2 \right)$ be a quartic field of discriminant $\Delta $ and index $f$, generated by an algebraic integer $\zeta_2 $ with minimum polynomial $g_2(x)$, and let $\zeta_1 = \sum_{i=1}^4 u_{i2} \zeta_2^{i-1} \in \Z \left[ \zeta_2 \right]$ be another algebraic integer generating $\EE $ with minimum polynomial $g_1(x)$. 
\begin{enumerate}
\item If there exists $t_2 \in \Z$ satisfying $g_2 \left( t_2 \right) \equiv 0 \pmod{f^2}$, then there exists $t_1 \in \Z$ satisfying $g_1 \left( t_1 \right) \equiv 0 \pmod{f^2} $. 
\item If there exists $t_2 \in \Z$ satisfying  
\small 
\begin{align}\label{bnikftwo}
g_2 \left( t_2 \right) & \equiv 0 \pmod{f^2} , & g_2 ' \left( t_2 \right) & \equiv 0 \pmod{f} , & \gcd \left( m , f \right) & = 1 ,
\end{align}
\normalsize 
where $m = u_{22} + 2 u_{32} t_2 + 3 u_{42} t_2^2$, then there exists $t_1 \in \Z$ satisfying
\begin{align}\label{bnikfone}
g_1 \left( t_1 \right) & \equiv 0 \pmod{f^2} , & g_1 ' \left( t_1 \right) & \equiv 0 \pmod{f} .
\end{align}
\end{enumerate}
\end{theorem}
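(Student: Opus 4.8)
The plan is to exploit the fact that $\zeta_1$ is a fixed integer polynomial in $\zeta_2$, so that $g_1$ and $g_2$ are tied together by a single polynomial identity over $\Z$, and then obtain the desired $t_1$ simply by pushing $t_2$ through that polynomial. Write $\phi(x) = u_{12} + u_{22} x + u_{32} x^2 + u_{42} x^3 \in \Z[x]$, so that $\zeta_1 = \phi(\zeta_2)$ and hence $g_1(\phi(\zeta_2)) = g_1(\zeta_1) = 0$. Thus $\zeta_2$ is a root of $g_1 \circ \phi \in \Z[x]$, and since $g_2$ is the (irreducible) minimal polynomial of $\zeta_2$ over $\Q$, it divides $g_1 \circ \phi$ in $\Q[x]$; as $g_2$ is monic, division with a monic divisor keeps the quotient integral, so we get a factorization $g_1(\phi(x)) = g_2(x)\, h(x)$ with $h \in \Z[x]$.

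For Item (1), I would simply set $t_1 = \phi(t_2)$. Then $g_1(t_1) = g_1(\phi(t_2)) = g_2(t_2)\, h(t_2) \equiv 0 \pmod{f^2}$ because $g_2(t_2) \equiv 0 \pmod{f^2}$ and $h(t_2) \in \Z$. That is the entire argument; no hypothesis on a derivative or on $m$ is needed here.

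For Item (2), keep $t_1 = \phi(t_2)$. The first congruence in \eqref{bnikfone} follows exactly as in Item (1). For the derivative condition I would differentiate the identity $g_1(\phi(x)) = g_2(x) h(x)$, obtaining $g_1'(\phi(x))\, \phi'(x) = g_2'(x) h(x) + g_2(x) h'(x)$, and then evaluate at $x = t_2$. The right-hand side is divisible by $f$ since $g_2'(t_2) \equiv 0 \pmod{f}$ and $g_2(t_2) \equiv 0 \pmod{f^2}$, so $g_1'(t_1)\, \phi'(t_2) \equiv 0 \pmod{f}$. Finally $\phi'(t_2) = u_{22} + 2 u_{32} t_2 + 3 u_{42} t_2^2 = m$, which is invertible modulo $f$ by the hypothesis $\gcd(m,f) = 1$; multiplying through by its inverse gives $g_1'(t_1) \equiv 0 \pmod{f}$, as required.

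The argument is short, and the only place that calls for care is the passage from divisibility in $\Q[x]$ to divisibility in $\Z[x]$: one must note that $g_1 \circ \phi \in \Z[x]$ (clear, since $g_1, \phi \in \Z[x]$) and that $g_2$ is monic, which is exactly where the assumption that $\zeta_2$ — and hence $\zeta_1$ — is an algebraic integer is used. I do not anticipate any other obstacle; the coprimality condition $\gcd(m,f)=1$ in Item (2) plays precisely the role of making $\phi'(t_2)$ cancellable modulo $f$, which is why Item (1) needs no analogous hypothesis.
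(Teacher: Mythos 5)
Your proof is correct and follows essentially the same route as the paper: both rest on the integral factorization $g_1(\phi(x)) = g_2(x)\,h(x)$ with $\phi(x) = u_{12} + u_{22}x + u_{32}x^2 + u_{42}x^3$, take $t_1 = \phi(t_2)$, and differentiate the identity to get the second congruence, with $\gcd(m,f)=1$ cancelling $\phi'(t_2) = m$ modulo $f$. The only difference is that you justify the factorization by the standard minimal-polynomial and monic-division argument, whereas the paper derives it from explicit power-sum matrix identities ($P_1 = U^{\top}P_2U$); your route is cleaner and, if anything, makes the role of the hypothesis $\gcd(m,f)=1$ more explicit than the paper does.
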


\begin{proof}
Let $\zeta_{11}$ and $\zeta_{12}$ be algebraic integers generating a quartic field $\EE = \Q \left( \zeta_{1k} \right)$, $(k = 1,2)$ and let $g_k(x) = x^4 + b_k x^3 + c_k x^2 + d_k x + e_k$ be the minimum polynomial of $\zeta_{1k}$ and let $\zeta_{1k}, \zeta_{2k}, \zeta_{3k}, \zeta_{4k}$ be the roots of $g_k(x)$. Let 
\begin{align*}
 \Xi_k & = \left[ \zeta_{ik}^{j-1} \right]_{4 \times 4} , & U & = \left[ u_{ij} \right]_{4 \times 4} , 
\end{align*}
where $\left(
\begin{array}{cccc}
 u_{11} & u_{21} & u_{31} & u_{41} \\
\end{array}
\right)^{\top } = \left(
\begin{array}{cccc}
 1 & 0 & 0 & 0 \\
\end{array}
\right)^{\top }$ and the other rational integer entries of $U$ are given by $ \Xi_1  = \Xi_2 U $. Let $P_k = \left[ p_{i+j-2}^{(k)} \right]_{4 \times 4}$ with entries defined by $P_k = \Xi_k^{\top } \Xi_k$, satisfying
\begin{align*}
p_0^{(k)} & = 4, & p_1^{(k)} & = - b_k , 
\end{align*}
\begin{equation*}
\left(
\begin{array}{ccccc}
1 & b_k & c_k & d_k & e_k 
\end{array}
\right) \left(
\begin{array}{ccccc}
 p_2^{(k)} & p_3^{(k)} & p_4^{(k)} & p_5^{(k)} & p_6^{(k)} \\
 p_1^{(k)} & p_2^{(k)} & p_3^{(k)} & p_4^{(k)} & p_5^{(k)} \\
 2 & p_1^{(k)} & p_2^{(k)} & p_3^{(k)} & p_4^{(k)} \\
 0 & 3 & p_1^{(k)} & p_2^{(k)} & p_3^{(k)} \\
 0 & 0 & 4 & p_1^{(k)} & p_2^{(k)} \\
\end{array}
\right) = [0]_{1 \times 5} . 
\end{equation*}
Then since $P_k = \Xi_k^{\top } \Xi_k$, we have 
\begin{equation*}
P_1 = U^{\top} P_2 U . 
\end{equation*}
The minimum polynomial of $\zeta_{12}$ is easily obtained in terms of the entries of the matrix $P_k$. Abbreviating the $p_j^{(k)}$ by $p_j$, let 
\begin{equation*}
S^{(k)} = \left(
\begin{array}{ccccc}
 1 & x & x^2 & x^3 & x^4 \\
 4 & p_1 & p_2 & p_3 & p_4 \\
 0 & 3 & p_1 & p_2 & p_3 \\
 0 & 0 & 2 & p_1 & p_2 \\
 0 & 0 & 0 & 1 & p_1 \\
\end{array}
\right) . 
\end{equation*}
Then 
\begin{equation*}
g_k(x) = \frac{1}{24} \det \left( S^{(k)} \right) . 
\end{equation*}
There are identities relating the entries of $U$ and $P_2$. For example, again abbreviating the $p_j^{(2)}$ by $p_j$ and $b_2$, $c_2$, $d_2$, $e_2$ by $b, c, d, e$,  
\begin{equation*}
u_{13} = u_{12}^2 - e \left( - 2 b u_{42} u_{32} + c u_{42}^2 + p_2 u_{42}^2 + u_{32}^2 + 2 u_{22} u_{42} \right) .
\end{equation*}
Letting 
\begin{equation*}
y = u_{12} + u_{22} x + u_{32} x^2 + u_{42} x^3 ,
\end{equation*}
these identities can be used to show that 
\begin{align*}
g_1 (y) & = g_2(x) q(x) , & g_1'(y) & = \frac{1}{y'(x)} \left( g_1 (y) \right)' = \frac{1}{y'(x)} \left( g_2(x) q'(x) + g_2'(x) q(x) \right) \in \Z [x] ,
\end{align*}
where $q(x)$ is a polynomial of degree $8$ and by $g_1'(y)$ we mean the replacement $x \longmapsto y$ in $g_1'(y)$. Thus if 
\small 
\begin{align*}
g_2 \left( t_2 \right) & \equiv 0 \pmod{f^2} , & g_2' \left( t_2 \right) & \equiv 0 \pmod{f} ,  
\end{align*}
\normalsize 
then, letting
\begin{eqnarray*}
t_1 & = & u_{12} + u_{22} t_2 + u_{32} t_2^2 + u_{42} t_2^3 , 
\end{eqnarray*}
$t_1$ satisfies \eqref{bnikfone}. 
\end{proof}

\section{Mapping an order to a pair of ternary quadratic forms}\label{getpair}

Given an order $\mathcal{O}$ of a quartic number field, how do we find a pair $(A, B)$ of symmetric integral matrices that parametrize the order $\mathcal{O}$? To see the difficulty in finding a pair $(A, B)$ that parametrizes a given order with a normalized basis, the problem requires solving a system of fifteen non-linear equations in twelve variables. Assuming that the $c_{ij}^{(k)}$ are known rational integers, we must solve the following system for the $a_{ij}$ and $b_{ij}$. 
\small 
\begin{align}
\nonumber c_{11}^{(1)} & = a_{13} b_{12} - a_{12} b_{13}  & c_{11}^{(2)} & = a_{11} b_{13} - a_{13} b_{11} , & c_{11}^{(3)} & = a_{12} b_{11} - a_{11} b_{12} , \\
\nonumber              &   \ + a_{23} b_{11} - a_{11} b_{23} , & & & & \\
\nonumber &   &  &   & c_{12}^{(3)} & = a_{22} b_{11} - a_{11} b_{22} , \\
\nonumber &   & c_{13}^{(2)} & =  a_{11} b_{33} - a_{33} b_{11} , & c_{13}^{(3)} & = a_{23} b_{11} - a_{11} b_{23} , \\
\nonumber c_{22}^{(1)} & = a_{23} b_{22} - a_{22} b_{23} , & c_{22}^{(2)} & = a_{12} b_{23} - a_{23} b_{12}  & c_{22}^{(3)} & = a_{22} b_{12} - a_{12} b_{22} , \\
\label{tonmgdf}             &                                   &              & \ + a_{22} b_{13} - a_{13} b_{22}  , & & \\ 
\nonumber c_{23}^{(1)} & = a_{33} b_{22} - a_{22} b_{33} , & c_{23}^{(2)} & = a_{12} b_{33} - a_{33} b_{12} , & c_{23}^{(3)} & = a_{22} b_{13} - a_{13} b_{22} , \\
\nonumber c_{33}^{(1)} & = a_{33} b_{23} - a_{23} b_{33} , & c_{33}^{(2)} & = a_{13} b_{33} - a_{33} b_{13} , & c_{33}^{(3)} & = a_{12} b_{33} - a_{33} b_{12} \\
\nonumber              &                                   &              &                                 &              & \ + a_{23} b_{13} - a_{13} b_{23} .
\end{align}
\normalsize

The following result shows that much fewer than fifteen scalar equations are required to be solved for the $a_{ij}$, $b_{ij} \in \Z$ in order to recover the twelve parameters of an order of a quartic field. This result leads to our algorithm for solving the problem of finding a pair of ternary quadratic forms that parametrizes a given order of a quartic field. 

\begin{lemma}\label{nineeqns}
Let $\mathcal{O}$ be an order of a quartic field with normalized basis $\left\{ 1, \omega_1, \omega_2, \omega_3 \right\}$ with coefficients $c_{ij}^{(k)}$ given by $\omega_i \omega_j = \sum_{k = 0}^{3} c_{ij}^{(k)} \omega_k$, $\omega_0 = 1$. Let 
\begin{align}\label{defell}
\ell & = c_{13}^{(3)} - c_{11}^{(1)} , & m & = c_{33}^{(3)} - c_{23}^{(2)} , & n & = c_{22}^{(2)} - c_{23}^{(3)} ,
\end{align}
let the column matrices $C_j$, the concatenation $Z$ of the $C_j$, the row matrices $R_j$, and the concatenation $W$ of the $R_j$, be given by 
\small 
\begin{align*}
Z & = \left(
\begin{array}{cccccc}
 C_{1} & C_{2} & C_{3} & C_{4} & C_{5} & C_{6} \\
\end{array}
\right) = \left(
\begin{array}{cccccc}
 a_{11} & a_{12} & a_{13} & a_{22} & a_{23} & a_{33} \\
 b_{11} & b_{12} & b_{13} & b_{22} & b_{23} & b_{33} \\
\end{array}
\right) , \\
 W & = \left[ w_{ij} \right] = \left(
\begin{array}{c}
 R_1  \\
 R_2  \\
 R_3  \\
 R_4  \\
\end{array}
\right) = \left(
\begin{array}{cccccc}
 \ell & - c_{11}^{(3)} & c_{11}^{(2)} & - c_{12}^{(3)} & - c_{13}^{(3)} & c_{13}^{(2)} \\
 c_{11}^{(2)} & \ell & 0 & c_{23}^{(3)} & m & - c_{33}^{(2)} \\
 - c_{11}^{(3)} & 0 & - \ell &  c_{22}^{(3)} & - n & - c_{23}^{(2)} \\
 - c_{12}^{(3)} & - c_{22}^{(3)} & -c_{23}^{(3)} & c_{11}^{(2)} & c_{22}^{(1)} & c_{23}^{(1)} \\
\end{array}
\right) .
\end{align*}
\normalsize 
Let 
\begin{align*}
X_1 & = \left(
\begin{array}{cc}
 C_{2} & C_{3} \\
\end{array}
\right) , & X_2 & = \left(
\begin{array}{cc}
 C_{1} & C_{3} \\
\end{array}
\right) , & X_3 & = \left(
\begin{array}{cc}
 C_{1} & C_{2} \\
\end{array}
\right) , & X_4 & = \left(
\begin{array}{cc}
 C_{1} & C_{4} \\
\end{array}
\right) , \\
 Y_1 & = \left(
\begin{array}{c}
 R_2  \\
 - R_3  \\
\end{array}
\right) , & Y_k & = \left(
\begin{array}{c}
 R_k  \\
 R_1  \\
\end{array}
\right) , & & & & 
\end{align*}
for $k = 2, 3, 4$ and $x_j = \det \left( X_j \right) $ for each $j = 1, 2, 3, 4$. The $a_{11}, a_{12}, a_{13}, a_{22}, a_{23}, a_{33}$, $b_{11}, \dots , b_{33}$ are twelve integers satisfying one of the four systems of equations (for $t = 1, 2, 3, 4$)
\begin{align}
\label{foureqns} X_t Y_t & = x_t Z , & x_t & = w_{t1}  
\end{align}
if and only if the fifteen equations \eqref{tonmgdf} are satisfied by the $a_{ij}, b_{ij}$, where 
\begin{enumerate}
\item $\ell \not = 0$ whenever $t = 1$.
\item $\ell = 0$ and $c_{11}^{(2)} \not = 0$ whenever $t = 2$. 
\item $\ell = 0 = c_{11}^{(2)}$ and $c_{11}^{(3)} \not = 0$ whenever $t = 3$.
\item $\ell = 0 = c_{11}^{(2)} = c_{11}^{(3)}$ and $c_{12}^{(3)} \not = 0$ whenever $t = 4$. 
\end{enumerate}  
\end{lemma}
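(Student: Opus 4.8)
The plan is to reduce everything to $2\times 2$ determinants. For the columns $C_i=\binom{a_i}{b_i}$, $C_j=\binom{a_j}{b_j}$ of $Z$ write $[C_i,C_j]=a_ib_j-a_jb_i$ for the corresponding $2\times 2$ minor. Two elementary remarks drive the argument. First, every one of the fifteen equations \eqref{tonmgdf} asserts that a structure constant $c_{ij}^{(k)}$ (or, after using \eqref{defell}, a difference of two of them) equals such a minor of $Z$; correspondingly each entry $w_{ij}$ of $W$ becomes $\pm$ a minor of $Z$ once \eqref{tonmgdf} holds, and each $x_t=\det X_t$ is a minor of $Z$ by definition. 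Second, any three vectors in $\Q^2$ are linearly dependent, so for all $2\times 1$ vectors $u,v,w$ one has the identity $[v,w]\,u-[u,w]\,v+[u,v]\,w=0$. The whole lemma will be obtained by expanding the matrix equation $X_tY_t=x_tZ$ column by column and matching each resulting vector equation to this identity.

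\emph{Proof that \eqref{tonmgdf} implies \eqref{foureqns}.} Assume the twelve integers satisfy \eqref{tonmgdf} and let $t$ be the least index in $\{1,2,3,4\}$ for which the corresponding pivot among $\ell,c_{11}^{(2)},c_{11}^{(3)},c_{12}^{(3)}$ is nonzero, so that condition (t) holds. The scalar equation $x_t=w_{t1}$ is then immediate from \eqref{tonmgdf} and \eqref{defell}: for $t=1$ it reads $[C_2,C_3]=c_{13}^{(3)}-c_{11}^{(1)}$, which is precisely the rearranged $(1,1)^{(1)}$--equation, and for $t=2,3,4$ it is literally the $(1,1)^{(2)}$--, $(1,1)^{(3)}$-- and $(1,2)^{(3)}$--equation. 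For the matrix equation $X_tY_t=x_tZ$ we check column $j$: after substituting \eqref{tonmgdf} to rewrite the $w_{ij}$ and $\ell,m,n$ as minors of $Z$, this column is either trivially $0=0$, or an instance of the three-vector identity with the two columns of $X_t$ playing the roles of two of $u,v,w$, or it reduces via that identity to the vanishing of a pivot of index $<t$, which holds because condition (t) asserts exactly that. Running this for $t=1$ (pivot pair $C_2,C_3$, with $Y_1=\binom{R_2}{-R_3}$) and then verbatim for $t=2,3,4$ (pivot pairs $C_1,C_3$; $C_1,C_2$; $C_1,C_4$, with $Y_t=\binom{R_t}{R_1}$) gives \eqref{foureqns}.

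\emph{Proof that \eqref{foureqns} implies \eqref{tonmgdf}.} Suppose the twelve integers satisfy the system \eqref{foureqns} for an index $t$ for which condition (t) holds; take $t=1$, the other cases being identical. Then $x_1=w_{11}$ says $[C_2,C_3]=\ell\neq 0$, so $C_2,C_3$ form a basis of $\Q^2$ and any $2\times 1$ vector $v$ is determined by the pair $([v,C_2],[v,C_3])$. The column-$j$ equation of $X_1Y_1=x_1Z$ reads $w_{2j}C_2-w_{3j}C_3=\ell C_j$; pairing it with $C_2$ and with $C_3$ and dividing by $\ell$ recovers directly the equations of \eqref{tonmgdf} for $c_{11}^{(2)},c_{11}^{(3)},c_{22}^{(3)},c_{23}^{(3)},c_{23}^{(2)},c_{33}^{(2)}$, together with the identities $[C_5,C_3]=m$ and $[C_2,C_5]=n$; combined with \eqref{defell} the latter two yield the equations for $c_{22}^{(2)}$ and $c_{33}^{(3)}$. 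The seven remaining equations --- those for $c_{11}^{(1)},c_{12}^{(3)},c_{13}^{(2)},c_{13}^{(3)},c_{22}^{(1)},c_{23}^{(1)},c_{33}^{(1)}$ --- are obtained by feeding the minors just computed into the Pl\"ucker (quadratic) relations among the $2\times 2$ minors of $Z$ and into the associativity relations satisfied by the structure constants of the order $\mathcal{O}$; for example the Pl\"ucker relation on the index set $\{1,2,3,5\}$ together with one such relation among the $c_{ij}^{(k)}$ forces $[C_5,C_1]=c_{13}^{(3)}$, and then $c_{11}^{(1)}=c_{13}^{(3)}-\ell$ is the last missing equation, the others being analogous.

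\emph{The main obstacle.} Everything except the final step of the second direction is routine determinant bookkeeping organized around the three-vector identity; the real work is identifying exactly which Pl\"ucker relations and which associativity identities among the $c_{ij}^{(k)}$ are needed to deduce the seven leftover equations from the ones already extracted, and checking that they are mutually consistent for the $c_{ij}^{(k)}$ of a genuine quartic order. One also has to handle the degenerate situation $\ell=c_{11}^{(2)}=c_{11}^{(3)}=c_{12}^{(3)}=0$: either by showing it cannot occur for a normalized basis of an order of a quartic field, so that exactly one condition (t) applies, or by noting that the statement is vacuous in that case.
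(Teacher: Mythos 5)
Your proposal is correct and follows essentially the same route as the paper's proof: you verify the columns of $X_tY_t=x_tZ$ against the defining minors of $Z$ (your three-vector identity $[v,w]u-[u,w]v+[u,v]w=0$ is exactly the entry-by-entry computation the paper performs when equating its two expressions for $Y_t$), and you recover the seven leftover structure constants by combining the normalized-basis identities --- the paper's \eqref{propbasis}--\eqref{case4eqns}, which are the associativity relations you invoke --- with Pl\"ucker relations among the $2\times 2$ minors of $Z$ and division by the nonzero pivot. The only differences are cosmetic: the paper leaves the Pl\"ucker relations implicit in the phrase ``these formulas agree with \eqref{tonmgdf}'' and disposes of the degenerate case $\ell=c_{11}^{(2)}=c_{11}^{(3)}=c_{12}^{(3)}=0$ in the remark immediately following the lemma, exactly as you anticipate.
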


\begin{proof}
Verifying that \eqref{foureqns} holds requires using the following identities, which can be obtained by eliminating all twelve of the $a_{ij}$ and $b_{ij}$ from the equations in \eqref{tonmgdf}. 
\small 
\begin{align}
\label{propbasis} 
\ell c_{12}^{(3)} & = c_{11}^{(2)} c_{22}^{(3)} + c_{11}^{(3)} c_{23}^{(3)} , & \ell c_{13}^{(2)} & = c_{11}^{(2)} c_{23}^{(2)} + c_{11}^{(3)} c_{33}^{(2)} , & \ell c_{23}^{(1)} & = c_{22}^{(3)} c_{33}^{(2)} - c_{23}^{(2)} c_{23}^{(3)} , \\
\nonumber \ell c_{13}^{(3)} & = c_{11}^{(3)} m - c_{11}^{(2)} n , & \ell c_{22}^{(1)} & = - c_{22}^{(3)} m - c_{23}^{(3)} n , & \ell c_{33}^{(1)} & = - c_{23}^{(2)} m -  c_{33}^{(2)} n , \\
\label{case2eqns} c_{11}^{(2)} c_{22}^{(1)} & = c_{13}^{(3)} c_{23}^{(3)} - m c_{12}^{(3)} , & c_{11}^{(2)} c_{23}^{(1)} & = c_{12}^{(3)} c_{33}^{(2)} - c_{13}^{(2)} c_{23}^{(3)} , & c_{11}^{(2)} c_{33}^{(1)} & = c_{13}^{(3)} c_{33}^{(2)} - m c_{13}^{(2)} , \\
\label{case3eqns} c_{11}^{(3)} c_{23}^{(1)} & = c_{13}^{(2)} c_{22}^{(3)} - c_{12}^{(3)} c_{23}^{(2)} , & c_{11}^{(3)} c_{33}^{(1)} & = - c_{13}^{(3)} c_{23}^{(2)} - n c_{13}^{(2)} , & c_{11}^{(3)} c_{22}^{(1)} & = - c_{22}^{(3)} c_{13}^{(3)} - n c_{12}^{(3)}  \\
\label{case4eqns} c_{33}^{(1)} c_{12}^{(3)} & = c_{13}^{(3)}c_{23}^{(1)} + c_{13}^{(2)} c_{22}^{(1)} . & & & &  
\end{align}
\normalsize 
We first consider the case where $t = 1$, in other words $x_1 = \ell \not = 0$. First assume that the normalized basis for $\mathcal{O}$ is parametrized by the $a_{ij}, b_{ij}$ and hence the fifteen equations \eqref{tonmgdf} are satisfied by the $a_{ij}, b_{ij}$. For this case, we will only require equations from (\ref{propbasis}).
Since we assume that $\ell \not= 0$, we obtain the correct $c_{ij}^{(k)}$. It is important to note that the equations \eqref{propbasis}, \eqref{case2eqns}, \eqref{case3eqns}, \eqref{case4eqns} are properties of a normalized basis. 

Conversely, assume that \eqref{foureqns} holds for some twelve integers $a_{ij}, b_{ij}$. Then, solving this matrix equation together with $\ell = \det \left( X_1 \right) \ ( \not= 0)$ for the $c_{ij}^{(k)}$ gives
\begin{equation*}
Y_1 = \left(
\begin{array}{cccccc}
	x_2 & x_1 & 0 & a_{22} b_{13}-a_{13} b_{22} & a_{23} b_{13}-a_{13} b_{23} & a_{33} b_{13}-a_{13} b_{33} \\
	-x_3 & 0 & x_1 & a_{12} b_{22}-a_{22} b_{12} & a_{12} b_{23}-a_{23} b_{12} & a_{12} b_{33}-a_{33} b_{12} \\
\end{array}
\right) , 
\end{equation*}
while by definition we have 
\begin{equation*}
Y_1 = \left(
\begin{array}{cccccc}
c_{11}^{(2)} & \ell & 0 & c_{23}^{(3)} & m & - c_{33}^{(2)} \\
c_{11}^{(3)} & 0 & \ell &  -c_{22}^{(3)} &  n & c_{23}^{(2)} \\
\end{array}
\right) .
\end{equation*}
By equating these matrices we obtain $c_{11}^{(2)} = a_{11} b_{13} - a_{13} b_{11}$ from the $(1,1)$-entry, while subsequent entries yield formulas for $c_{11}^{(3)}$, $c_{23}^{(3)}$, $c_{22}^{(3)}$, $m$, $n$, $c_{33}^{(2)}$, and $c_{23}^{(2)}$. Each of these formulas coincides with those given in \eqref{tonmgdf}. Since $c_{33}^{(3)} = m + c_{23}^{(2)}$ and $c_{22}^{(2)} = n + c_{23}^{(3)} $, we also obtain formulas for $c_{33}^{(3)}$ and $c_{22}^{(2)}$ that agree with \eqref{tonmgdf}. Since $\ell \not= 0$, we can use \eqref{propbasis} to obtain formulas for the remaining $c_{ij}^{(k)}$ including $c_{12}^{(3)}$. These formulas agree with \eqref{tonmgdf}. Also properties of a normalized basis are $c_{12}^{(1)} = c_{12}^{(2)} = c_{13}^{(1)} = 0$. Hence all twenty-four $c_{ij}^{(k)}$ may be correctly recovered from \eqref{foureqns} in the case that $\ell \not= 0$. 

The remaining three cases $t = 2, 3, 4$ are proved similarly. For example, in Case 2 if we assume that \eqref{foureqns} holds for some twelve integers $a_{ij}, b_{ij}$, then, solving this matrix equation together with $c_{11}^{(2)} = \det \left( X_2 \right)  \ ( \not= 0)$ for the $c_{ij}^{(k)}$ gives
\begin{equation*}
Y_2 = \left(
\begin{array}{cccccc}
 x_2 & x_1 & 0 & a_{22} b_{13}-a_{13} b_{22} & a_{23} b_{13}-a_{13} b_{23} & a_{33} b_{13}-a_{13} b_{33} \\
 0 & x_3 & x_2 & x_4 & a_{11} b_{23}-a_{23} b_{11} & a_{11} b_{33}-a_{33} b_{11} \\
\end{array}
\right) , 
\end{equation*}
while by definition we have 
\begin{equation*}
Y_2 = \left(
\begin{array}{cccccc}
 c_{11}^{(2)} & \ell & 0 & c_{23}^{(3)} & m & - c_{33}^{(2)} \\
 \ell & - c_{11}^{(3)} & c_{11}^{(2)} & - c_{12}^{(3)} & - c_{13}^{(3)} & c_{13}^{(2)} \\
\end{array}
\right) . 
\end{equation*}
This immediately provides expressions for $c_{11}^{(1)}$, $c_{11}^{(2)}$, $c_{11}^{(3)}$, $c_{12}^{(3)}$, $c_{13}^{(2)}$, $c_{13}^{(3)}$, $c_{23}^{(3)}$, $c_{33}^{(2)}$ that agree with \eqref{tonmgdf}. Since $c_{11}^{(2)} c_{22}^{(1)} = c_{13}^{(3)} c_{23}^{(3)} - m c_{12}^{(3)}$ and $m = a_{23} b_{13} - a_{13} b_{23}$, we obtain an expression for $c_{22}^{(1)}$ that agrees with \eqref{tonmgdf}. Likewise, since $\ell = 0$, \eqref{propbasis} gives $c_{11}^{(2)} n = m c_{11}^{(3)}$, $c_{11}^{(2)} c_{22}^{(3)} = - c_{11}^{(3)} c_{23}^{(3)}$, and $c_{11}^{(2)} c_{23}^{(2)} = - c_{11}^{(3)} c_{33}^{(2)}$. Since $c_{22}^{(2)} = n + c_{23}^{(3)}$ and $c_{33}^{(3)} = m + c_{23}^{(2)}$, we obtain expressions for $c_{22}^{(2)}$, $c_{22}^{(3)}$, $c_{23}^{(2)}$, and $c_{33}^{(3)}$ that agree with \eqref{tonmgdf}. Finally, $c_{11}^{(2)} c_{23}^{(1)} = c_{12}^{(3)} c_{33}^{(2)} - c_{13}^{(2)} c_{23}^{(3)}$ and $c_{11}^{(2)} c_{33}^{(1)} = c_{13}^{(3)} c_{33}^{(2)} - m c_{13}^{(2)}$ so we are able to solve for $c_{23}^{(1)} $ and $c_{33}^{(1)} $ in agreement with \eqref{tonmgdf}.

Case 3 proceeds as follows: Given that \eqref{foureqns} holds for some twelve integers $a_{ij}, b_{ij}$, solving this matrix equation together with $- c_{11}^{(3)} = \det \left( X_3 \right) \ ( \not= 0)$ for the $c_{ij}^{(k)}$ gives
\begin{equation*}
Y_3 = \left(
\begin{array}{cccccc}
x_3 & 0 & - x_1 & a_{22} b_{12}-a_{12} b_{22} & a_{23} b_{12}-a_{12} b_{23} & a_{33} b_{12}-a_{12} b_{33} \\
0 & x_3 & x_2 & x_4 & a_{11} b_{23}-a_{23} b_{11} & a_{11} b_{33}-a_{33} b_{11} \\
\end{array}
\right) ,
\end{equation*}
while by definition we have 
\begin{equation*}
Y_3 = \left(
\begin{array}{cccccc}
 - c_{11}^{(3)} & 0 & - \ell &  c_{22}^{(3)} & - n & - c_{23}^{(2)} \\
 \ell & - c_{11}^{(3)} & c_{11}^{(2)} & - c_{12}^{(3)} & - c_{13}^{(3)} & c_{13}^{(2)} \\
\end{array}
\right) . 
\end{equation*}
This gives us seven equations for the variables $- x_3  = c_{11}^{(3)}$, $c_{12}^{(3)}$, $c_{13}^{(2)}$, $c_{13}^{(3)}$, $c_{22}^{(3)}$, $c_{23}^{(2)}$, $n$ which match with \eqref{tonmgdf}. Note that this equation also gives us the correct identity for $\ell$ since $\ell = 0$ and hence we also get $c_{11}^{(1)}$. From the $(2,3)$-th entry, we get $a_{11} b_{13}-a_{13} b_{11} = 0 = c_{11}^{(2)}$ since we have assumed that $t = 3$.

To obtain the remaining identities for the $c_{ij}^{(k)}$ we can use the equations \eqref{case3eqns} which give the equations for $c_{23}^{(1)}$, $c_{33}^{(1)}$ and $c_{22}^{(1)}$. The system \eqref{propbasis} and the fact that $c_{11}^{(2)} = 0, c_{11}^{(3)}\neq 0$ gives us 
$$\ell c_{12}^{(3)} = c_{11}^{(3)}c_{23}^{(3)}, \qquad \ell c_{13}^{(2)} = c_{11}^{(3)}c_{33}^{(2)},
 \qquad  \ell  c_{13}^{(3)}  =  c_{11}^{(3)}  m.$$ From these we obtain the correct equations for $c_{23}^{(3)}$ and $c_{33}^{(2)}$. Moreover, since we know $\ell = 0$, we get that $c_{23}^{(3)} = c_{33}^{(2)} = m = 0$. One may immediately deduce that $c_{22}^{(2)} = n$ and  $c_{33}^{(3)} = 0$ and both satisfy the desired formulas. 
 
Finally to prove Case 4, given that \eqref{foureqns} holds for some twelve integers $a_{ij}, b_{ij}$, solving this matrix equation together with $- c_{12}^{(3)}= \det \left( X_4 \right) \ ( \not= 0)$ for the $c_{ij}^{(k)}$ gives
\small 
\begin{equation*}
Y_4 =
\left(
\begin{array}{cccccc}
 x_4 & a_{12} b_{22}-a_{22} b_{12} & a_{13} b_{22}-a_{22} b_{13} & 0 & a_{23} b_{22}-a_{22} b_{23} & a_{33} b_{22}-a_{22} b_{33} \\
 0 & x_3 & x_2 & x_4 & a_{11} b_{23}-a_{23} b_{11} & a_{11} b_{33}-a_{33} b_{11} \\
\end{array}
\right)
, 
\end{equation*} 
\normalsize   
while by definition we have 
\begin{equation*}
Y_4 = \left(
\begin{array}{cccccc}
 - c_{12}^{(3)} & - c_{22}^{(3)} & -c_{23}^{(3)} & c_{11}^{(2)} & c_{22}^{(1)} & c_{23}^{(1)} \\ 
 \ell & - c_{11}^{(3)} & c_{11}^{(2)} & - c_{12}^{(3)} & - c_{13}^{(3)} & c_{13}^{(2)} \\
\end{array}
\right) . 
\end{equation*}
Equations for $c_{11}^{(2)},c_{11}^{(3)}, c_{12}^{(3)}, c_{13}^{(2)}, c_{13}^{(3)}, c_{22}^{(1)}, c_{22}^{(3)}, c_{23}^{(1)}$ and $c_{23}^{(3)}$ can be immediately verified. Using \eqref{case4eqns}, we can obtain the correct equations for $c_{23}^{(2)},c_{33}^{(1)}$ and $c_{33}^{(2)}$. The first equation of \eqref{propbasis} provides a correct equation for $\ell$, and the identity $\ell = c_{13}^{(3)} - c_{11}^{(1)}$ gives the formula for $c_{11}^{(1)}$. Similarly, we can use \eqref{case3eqns} to get the appropriate formula for $n$, and use the identity $n = c_{22}^{(2)}- c_{23}^{(3)}$ to obtain the correct formula for $c_{22}^{(2)}$. The formula for $c_{33}^{(3)}$ may be obtained in a similar fashion. This completes the proof of Case 4.
\end{proof}

There are no more than four cases in Lemma \ref{nineeqns} since if $c_{11}^{(1)} - c_{13}^{(3)} = 0 = c_{11}^{(2)} = c_{11}^{(3)} = c_{12}^{(3)} $, then $\mathcal{O}$ is not an order of a quartic field because the $\mathcal{O}$ have no divisors of zero and the assumption would force one of $\omega_1$ or $\omega_2$ to be zero so that $\mathcal{O}$ would have a zero discriminant. 

\begin{algorithm}\label{secondalgparams}
Given a normalized basis of an order $\mathcal{O}$ of a quartic field, compute a pair of ternary quadratic forms that parametrizes $\mathcal{O}$. 
\begin{enumerate}
\item Compute $t \in \{ 1, 2, 3, 4 \}$ according to the following rules, where $\ell $ is given by \eqref{defell}: If $\ell \not= 0$, then $t = 1$; if $\ell = 0$, $c_{11}^{(2)} \not = 0$, then $t = 2$; if $\ell = 0 = c_{11}^{(2)}$, $c_{11}^{(3)} \not = 0$, then $t = 3$; if $\ell = 0 = c_{11}^{(2)} = c_{11}^{(3)}$, $c_{12}^{(3)} \not = 0$, then $t = 4$. 
\item Construct the matrix $M_t$ depending on $t = 1, 2, 3, 4$, where 
\small 
\begin{eqnarray}
\label{themt} M_1 & = & \left(
\begin{array}{cccccc}
 - \ell & c_{11}^{(2)} & c_{11}^{(3)} & 0 & 0 & 0 \\
 0 & c_{23}^{(3)} & - c_{22}^{(3)} & - \ell & 0 & 0 \\
 0 & m & n & 0 & - \ell & 0 \\
 0 & - c_{33}^{(2)} & c_{23}^{(2)} & 0 & 0 & - \ell \\
\end{array}
\right) , \\
\nonumber M_2 & = & \left(
\begin{array}{cccccc}
 0 & -c_{11}^{(2)} & -c_{11}^{(3)} & 0 & 0 & 0 \\
 c_{23}^{(3)} & 0 & -c_{12}^{(3)} & -c_{11}^{(2)} & 0 & 0 \\
 m & 0 & -c_{13}^{(3)} & 0 & -c_{11}^{(2)} & 0 \\
 -c_{33}^{(2)} & 0 & c_{13}^{(2)} & 0 & 0 & -c_{11}^{(2)} \\
\end{array}
\right) , \\
\nonumber M_3 & = & \left(
\begin{array}{cccccc}
 0 & 0 & c_{11}^{(3)} & 0 & 0 & 0 \\
 c_{22}^{(3)} & -c_{12}^{(3)} & 0 & c_{11}^{(3)} & 0 & 0 \\
 - n & - c_{13}^{(3)} & 0 & 0 & c_{11}^{(3)} & 0 \\
 - c_{23}^{(2)} & c_{13}^{(2)} & 0 & 0 & 0 & c_{11}^{(3)} \\
\end{array}
\right) , \\
\nonumber M_4 & = & \left(
\begin{array}{cccccc}
 -c_{22}^{(3)} & c_{12}^{(3)} & 0 & 0 & 0 & 0 \\
 -c_{23}^{(3)} & 0 & c_{12}^{(3)} & 0 & 0 & 0 \\
 c_{22}^{(1)} & 0 & 0 & -c_{13}^{(3)} & c_{12}^{(3)} & 0 \\
 c_{23}^{(1)} & 0 & 0 & c_{13}^{(2)} & 0 & c_{12}^{(3)} \\
\end{array}
\right) . 
\end{eqnarray}
\normalsize 
\item Compute the row-style Hermite normal form $(U \mid V)$ of $\left( M_t^{\top} \mid I_6 \right)$, where the entries of $V \in \text{GL}_6(\Z )$ are denoted by $v_{ij}$.
\item Define the integer $k$ as follows:
\begin{align}\label{kfiveone}
k_{} & = 
  \begin{cases}
  \frac{\ell }{\left( v_{52} v_{63} - v_{53} v_{62} \right)} & \text{if $t = 1$,} \\
  \frac{c_{11}^{(2)}}{ v_{51} v_{63} }  & \text{if $t = 2$,} \\
  \frac{- c_{11}^{(3)}}{ v_{51} v_{62} } & \text{if $t = 3$,} \\
  \frac{- c_{12}^{(3)}}{ v_{51} v_{64} } & \text{if $t = 4$.}
  \end{cases} 
\end{align}
\item Output a pair of ternary quadratic forms
\begin{eqnarray*}
\mathcal{Q}_A & = & \left( a_{11}, a_{12}, a_{13}, a_{22}, a_{23}, a_{33} \right) = k_{} \left( v_{51}, v_{52}, v_{53}, v_{54}, v_{55}, v_{56} \right) , \\
\mathcal{Q}_B & = & \left( b_{11}, b_{12}, b_{13}, b_{22}, b_{23}, b_{33} \right) = \left( 0, v_{62}, v_{63}, v_{64}, v_{65}, v_{66} \right)  
\end{eqnarray*}
that parametrizes the order $\mathcal{O}$.
\end{enumerate}
\end{algorithm}

The main approach of our proof is to use Lemma \ref{nineeqns} and the method set out in Gilbert and Pathria \cite{GilbPath} for solving systems of linear Diophantine equations. Some of these details are apparent in Example \ref{oigfdas} below. When we use Algorithm \ref{secondalgparams} to find the $a_{ij}, b_{ij}$, Lemma \ref{nineeqns} guarantees that all twenty-four identities for the $c_{ij}^{(k)}$ are satisfied and the order is parametrized by a pair of ternary quadratic forms determined by the twelve integers $a_{ij}, b_{ij}$. 

\begin{proof}
Let $[0]$ denote the $4 \times 2$ matrix with zero entries, let 
\begin{equation*}
\left( w_{11}, w_{21}, w_{31}, w_{41} \right) = \left( \ell , c_{11}^{(2)}, - c_{11}^{(3)}, - c_{12}^{(3)} \right) , 
\end{equation*}
and let 
\small 
\begin{align*}
x_1 & = a_{12} b_{13} - a_{13} b_{12} , & x_2 & = a_{11} b_{13} - a_{13} b_{11} , & x_3 & = a_{11} b_{12} - a_{12} b_{11}, & x_4 & = a_{11} b_{22} - a_{22} b_{11} ,
\end{align*}
where the $a_{ij}$ and $b_{ij}$ are to be treated as indeterminants rather than any original parameters that may have been used to obtain the order $\mathcal{O}$.
\normalsize 
\begin{itemize}
\item Case $t$ : \ 
\begin{enumerate}
\item We must solve for $a_{ij}, b_{ij}$ the linear system
\begin{equation}\label{vareksdf}
M_t Z^T = [0] ,
\end{equation}
using the method set out in Gilbert and Pathria \cite{GilbPath}, where $Z$ is the matrix containing the $a_{ij}, b_{ij}$ given in Lemma \ref{nineeqns}. This equation is equivalent to $X_t Y_t = w_{t1} Z$ in Lemma \ref{nineeqns}. Recall from \cite{GilbPath} that we must unimodular row-reduce the matrix $\left( M_t^{\top} \mid I_6 \right)$ to $(U \mid V)$, where $U = \left[ u_{ij} \right]$ is in row echelon form and $V = \left[ v_{ij} \right] \in \text{GL}_6(\Z )$ with $v_{61} = 0$ since this action takes the row-style Hermite norm form of $\left( M_t^{\top} \mid I_6 \right)$. We then solve $U^{\top } K = 0$ for $K$ with entries in $\Z $. 

All solutions to \eqref{vareksdf} are of the form $Z^{\top } = V^{\top } K$. Furthermore, the top four rows of $K$ will have zero entries. This occurs since the matrix $U$ is in row-style Hermite normal form, meaning any rows of zeros will occur at the bottom. Since $M_t^{\top}$ has dimension four (over $\mathbb{Q}$) and $M$ has integer entries, $U$ is of the same dimension and thus has precisely two rows of zeros; row-style Hermite normal form implies that $U^{\top}$ must have two columns of zeros on the right, which forces $K$ to have solutions with the first four rows of $K$ having zero entries. The solution $K$ will not be unique; we express the result in terms of four integers $k_{51}, k_{52}, k_{61}, k_{62}$. Due to rows of zeros in $K$, we have 
\begin{equation*}
Z = \left(
\begin{array}{cc}
 k_{51} & k_{61} \\
 k_{52} & k_{62} \\
\end{array}
\right) \left(
\begin{array}{cccccc}
 v_{51} & v_{52} & v_{53} & v_{54} & v_{55} & v_{56} \\
 v_{61} & v_{62} & v_{63} & v_{64} & v_{65} & v_{66} \\
\end{array}
\right) .
\end{equation*}
\item Following Lemma \ref{nineeqns}, we must choose the $k_{51}, k_{52}, k_{61}, k_{62} \in \Z$ so that $w_{t1} = x_t$. This is easy; we can choose $k_{52} = 0$, $k_{61} = 0$, $k_{62} = 1$ and $k_{51}$ as appropriate. By the definitions of $w_{t1}$ and $x_t$, since the first four rows of $K$ have zero entries, and $v_{61} = 0$, we have 
\begin{eqnarray*}
x_1 & = & \left( k_{51} k_{62} - k_{52} k_{61} \right) \left( v_{52} v_{63} - v_{53} v_{62} \right) = \ell , \\
x_2 & = & \left( k_{51} k_{62} - k_{52} k_{61} \right) v_{51} v_{63} = c_{11}^{(2)} , \\
x_3 & = & \left( k_{51} k_{62} - k_{52} k_{61} \right) v_{51} v_{62} = - c_{11}^{(3)} , \\
x_4 & = & \left( k_{51} k_{62} - k_{52} k_{61} \right) v_{51} v_{64}  = - c_{12}^{(3)} .
\end{eqnarray*} 
Bhargava \cite{Bhargavacomp4} showed that there exists at least one solution set \\
$\{a_{11}, a_{12}, \ldots b_{33} \} \in \Z$ which corresponds to a pair of ternary quadratic forms parametrizing $\mathcal{O}$. Hence for example, when $\ell \not = 0$, we must have $\left( v_{52} v_{63} - v_{53} v_{62} \right) \mid \ell $ and more generally, a solution to $w_{t1} = x_t$ exists. Since we can put $k_{52} = 0$, $k_{61} = 0$, $k_{62} = 1$, this means that we may choose the $k_{51} = k$ as given by \eqref{kfiveone}. Finally, this choice of $k_{ij}$ provides the pair of ternary quadratic forms
\begin{eqnarray*}
\mathcal{Q}_A & = & k_{} \left( v_{51}, v_{52}, v_{53}, v_{54}, v_{55}, v_{56} \right) , \\
\mathcal{Q}_B & = & \left( 0, v_{62}, v_{63}, v_{64}, v_{65}, v_{66} \right)  
\end{eqnarray*}
that parametrizes the order $\mathcal{O}$, where $k_{51}$ is given by \eqref{kfiveone}. 
\end{enumerate}
\end{itemize}
\end{proof}

\begin{example}\label{oigfdas}
Again consider the order parametrized by $\mathcal{Q}_A = (2, -5, 3, 3, 1, 3)$ and $\mathcal{Q}_B = (0, -4, 3, -3, 1, -3)$. Recall that we obtained an order $\mathcal{O}$ where $\ell = -3$ and hence $t = 1$. Following Algorithm \ref{secondalgparams} we calculate the row-style Hermite normal form $(U \mid V)$ of $\left( M_t^{\top} \mid I_6 \right)$ and obtain 
\begin{align*}
U^{\top} & = \left(
\begin{array}{cccccc}
 1 & 0 & 0 & 0 & 0 & 0 \\
 0 & 3 & 0 & 0 & 0 & 0 \\
 1 & 0 & 3 & 0 & 0 & 0 \\
 0 & 0 & 0 & 3 & 0 & 0 \\
\end{array}
\right) , & V^{\top } & = \left(
\begin{array}{cccccc}
 1 & 0 & 0 & 0 & 2 & 0 \\
 1 & 0 & 0 & 0 & 3 & 4 \\
 -1 & 0 & 0 & 0 & -3 & -3 \\
 3 & 1 & 0 & 0 & 9 & 3 \\
 0 & 0 & 1 & 0 & -1 & -1 \\
 3 & 0 & 0 & 1 & 9 & 3 \\
\end{array}
\right) .
\end{align*}
Solutions $K$ to $U^{\top } K = 0$ are of the form 
\begin{equation*}
K = \left(
\begin{array}{cc}
 0 & 0 \\
 0 & 0 \\
 0 & 0 \\
 0 & 0 \\
 k_{51} & k_{52} \\
 k_{61} & k_{62} \\
\end{array}
\right) ,
\end{equation*}
for any $k_{51}, k_{52} , k_{61}, k_{62} \in \Z$. All solutions to $M_t Z^{\top } = 0$ are of the form $Z^{\top } = V^{\top } K$. It follows that 
\small
\begin{equation*}
Z = \left(
\begin{array}{cccccc}
 2 k_{51} & 3 k_{51}+4 k_{61} & -3 k_{51}-3 k_{61} & 9 k_{51}+3 k_{61} & -k_{51}-k_{61} & 9 k_{51}+3 k_{61} \\
 2 k_{52} & 3 k_{52}+4 k_{62} & -3 k_{52}-3 k_{62} & 9 k_{52}+3 k_{62} & -k_{52}-k_{62} & 9 k_{52}+3 k_{62} \\
\end{array}
\right) , 
\end{equation*}
\normalsize 
provided that $\ell = a_{12} b_{13} - a_{13} b_{12}$. We have $$a_{12} b_{13} - a_{13} b_{12} = 3 \left( k_{51} k_{62} - k_{52} k_{61} \right) $$ so we must choose the $k_{ij}$ so that $k_{51} k_{62} - k_{52} k_{61} = -1$. If we  choose 
\begin{equation*}
\left(
\begin{array}{cc}
 k_{51} & k_{52} \\
 k_{61} & k_{62} \\
\end{array}
\right) = \left(
\begin{array}{cc}
 1 & 0 \\
 -2 & -1 \\
\end{array}
\right) ,
\end{equation*}
then we recover the original twelve parameters used to construct $\mathcal{O}$. The solution given by Algorithm \ref{secondalgparams} puts 
\begin{equation*}
\left(
\begin{array}{cc}
 k_{51} & k_{52} \\
 k_{61} & k_{62} \\
\end{array}
\right) = \left(
\begin{array}{cc}
 -1 & 0 \\
 0 & 1 \\
\end{array}
\right) ,
\end{equation*}
and 
\begin{align*}
\mathcal{Q}_A & = (-2, -3, 3, -9, 1, -9), & \mathcal{Q}_B & = (0, 4, -3, 3, -1, 3) . 
\end{align*}
\end{example}

\begin{example}\label{secondeg}
Let $\EE$ be the unique quartic field of discriminant $1424$, a field for which there is no essential pair. An arithmetic matrix for a normalized integral basis $\left\{ 1, \omega_1, \omega_2, \omega_3 \right\}$ of the ring of integers $\mathcal{O}$ of $\EE$ is given by 
\footnotesize  
\begin{equation*}
N_{\mathcal{O}}^{(\alpha )} = \left(
\begin{array}{cccc}
 u & -x-4 z & -2 y & -4 x-6 z \\
 x & u-x & -2 z & 2 z-2 y \\
 y & 2 x+3 z & u+2 y & 3 x+2 z \\
 z & y-z & x+2 z & u-x+2 y-2 z \\
\end{array}
\right) ,
\end{equation*}
\normalsize 
from which we obtain the coefficients $c_{ij}^{(k)}$ in agreement with \eqref{quartpars}. The matrix $N_{\mathcal{O}}^{(\alpha )}$ facilitates arithmetic of the elements $u + x \omega_1 + y \omega_2 + z \omega_3$ of $\mathcal{O}$. We have $\ell = 0$ and hence $t = 2$. Following Algorithm \ref{secondalgparams} we calculate the row-style Hermite normal form $(U \mid V)$ of $\left( M_t^{\top} \mid I_6 \right)$ and obtain 
\footnotesize  
\begin{align*}
U^{\top} & = \left(
\begin{array}{cccccc}
 2 & 0 & 0 & 0 & 0 & 0 \\
 0 & 1 & 0 & 0 & 0 & 0 \\
 0 & 1 & 2 & 0 & 0 & 0 \\
 0 & 1 & 0 & 2 & 0 & 0 \\
\end{array}
\right) , & V^{\top } & = \left(
\begin{array}{cccccc}
 0 & 0 & 0 & 0 & 1 & 0 \\
 -1 & 0 & 0 & 0 & 0 & 0 \\
 0 & 1 & 0 & 0 & 0 & 2 \\
 0 & -1 & 0 & 0 & 1 & -1 \\
 0 & 0 & -1 & 0 & -1 & 1 \\
 0 & 1 & 0 & -1 & -1 & 3 \\
\end{array}
\right) .
\end{align*}
\normalsize 
Since $k = 1$, the required parameters are apparent in the right-most columns of the matrix $V^{\top }$,
\begin{align*}
\mathcal{Q}_A & = (1, 0, 0, 1, -1, -1), & \mathcal{Q}_B & = (0, 0, 2, -1, 1, 3) . 
\end{align*}
The binary cubic form $4 \det (A x + B y) = (-5, 18, -17, 4)$ has discriminant $1424$.
\end{example}

The following table lists pairs of ternary quadratic forms that parametrize the ring of integers of quartic number fields of small positive discriminant. These were found by computing an essential pair when they exist. Where there is no essential pair, Algorithm \ref{secondalgparams} was used.

\tiny
\begin{center}
\begin{longtable}{|c|c|c|c|c|}
\caption{Pairs of TQFs parametrizing the maximal order of quartic fields of positive discriminant $\Delta $. From left to right we have the field discriminant, an essential pair, $\left( a_{11}, a_{12}, a_{13}, a_{22}, a_{23}, a_{33} \right)$, $\left( b_{11}, b_{12}, b_{13}, b_{22}, b_{23}, b_{33} \right)$, and $4 \det (A x + B y)$.} \label{quarticindf} \\

\hline \multicolumn{1}{|c|}{$\Delta$} & \multicolumn{1}{c|}{ $\left[ f, \left( a, b, c, d, e \right) \right]$} & \multicolumn{1}{c|}{ \ $\mathcal{Q}_A$ \ } & \multicolumn{1}{c|}{\ $\mathcal{Q}_B$ \ } & \multicolumn{1}{c|}{ \ $4 \det (A x + B y)$ } \\ \hline
\endfirsthead

\multicolumn{4}{c}{\tablename\ \thetable{} -- continued} \\
\hline \multicolumn{1}{|c|}{$\Delta$} & \multicolumn{1}{c|}{$\left[ f, \left( a, b, c, d, e \right) \right]$} & \multicolumn{1}{c|}{ \ $\mathcal{Q}_A$ \ } & \multicolumn{1}{c|}{ \ $\mathcal{Q}_B$ \ } & \multicolumn{1}{c|}{ \ $4 \det (A x + B y)$ } \\ \hline
\endhead

\hline \multicolumn{5}{|c|}{{Continued}} \\ \hline
\endfoot

\hline 
\endlastfoot

 117 & [1, (1, -1, -1, 1, 1)] & (1, -1, 0, -1, 1, 1) & (0, 0, 1, -1, 0, 0) & (-6, -5, 1, 1)  \\   
 125 & [1, (1, -1, 1, -1, 1)] & (1, -1, 0, 1, -1, 1) & (0, 0, 1, -1, 0, 0) & (2, -3, -1, 1) \\
 144 & [1, (1, 0, -1, 0, 1)] & (1, 0, 0, -1, 0, 1) & (0, 0, 1, -1, 0, 0) & (-4, -4, 1, 1) \\
 189 & [1, (1, -1, 0, 2, 1)] & (1, -1, 0, 0, 2, 1) & (0, 0, 1, -1, 0, 0) & (-5, -6, 0, 1) \\
 225 & [2, (4, 6, 5, 3, 1)] & (1, 3, 0, 5, 3, 1) & (0, 0, 1, -2, 0, 0) & (2, 1, -5, 2) \\
 229 & [1, (1, 0, 0, -1, 1)] & (1, 0, 0, 0, -1, 1) & (0, 0, 1, -1, 0, 0) & (-1, -4, 0, 1) \\
 256 & [1, (1, 0, 0, 0, 1)] & (1, 0, 0, 0, 0, 1) & (0, 0, 1, -1, 0, 0) & (0, -4, 0, 1) \\
 257 & [1, (1, 0, 1, -1, 1)] & (1, 0, 0, 1, -1, 1) & (0, 0, 1, -1, 0, 0) & (3, -4, -1, 1) \\
 272 & [1, (1, 0, 1, -2, 1)] & (1, 0, 0, 1, -2, 1) & (0, 0, 1, -1, 0, 0) & (0, -4, -1, 1) \\
 320 & [1, (1, -2, 0, 0, 2)] & (1, -2, 0, 0, 0, 2) & (0, 0, 1, -1, 0, 0) & (-8, -8, 0, 1) \\  
 333 & [1, (1, -1, -2, 0, 3)] & (1, -1, 0, -2, 0, 3) & (0, 0, 1, -1, 0, 0) & (-27, -12, 2, 1) \\
 392 & [1, (1, -1, 0, 1, 1)] & (1, -1, 0, 0, 1, 1) & (0, 0, 1, -1, 0, 0) & (-2, -5, 0, 1) \\
 400 & [1, (1, 0, 3, 0, 1)] & (1, 0, 0, 3, 0, 1) & (0, 0, 1, -1, 0, 0) & (12, -4, -3, 1) \\
 432 & [1,(1,0,-3,0,3)] & (1,0,0,-3,0,3) & (0,0,1,-1,0,0) & (-36,-12,3,1) \\
 441 & [2,(4,-2,-1,-1,1)] & (1,-1,0,-1,-1,1) & (0,0,1,-2,0,0) & (-6,-7,1,2) \\
 512 & [1,(1,0,-2,0,2)] & (1,0,0,-2,0,2) & (0,0,1,-1,0,0) & (-16,-8,2,1) \\
 513 & [2,(4,2,-3,-1,1)] & (1,1,0,-3,-1,1) & (0,0,1,-2,0,0) & (-14,-9,3,2) \\
 549 & [1,(1,-2,-2,3,3)] & (1,-2,0,-2,3,3) & (0,0,1,-1,0,0) & (-45,-18,2,1) \\ 
 576 & \ & (1, 0, 1, 0, 0, 1) & (0, 0, 2, -1, 0, 2) & (0, -3, -4, 4) \\
 576 & \ & (1, 0, 1, -1, 0, -1) & (0, 0, 2, -1, 0, -2)  & (5, 17, 16, 4) \\
 592 & [(1,(1,0,2,-2,1)] & (1,0,0,2,-2,1) & (0,0,1,-1,0,0) & (4,-4,-2,1) \\
 605 & [1,(1,-1,1,1,1)] & (1,-1,0,1,1,1) & (0,0,1,-1,0,0) & (2,-5,-1,1) \\
 656 & [1,(1,-2,-1,2,2)] & (1,-2,0,-1,2,2) & (0,0,1,-1,0,0) & (-20,-12,1,1) \\
 657 & [2,(4,-2,5,-1,1)] & (1,-1,0,5,-1,1) & (0,0,1,-2,0,0) & (18,-7,-5,2) \\
 697 & [1,(1,-1,2,-1,2)] & (1,-1,0,2,-1,2) & (0,0,1,-1,0,0) & (13,-7,-2,1) \\
 725 & [1,(1,-1,-3,1,1)] & (1,-1,0,-3,1,1) & (0,0,1,-1,0,0) & (-14,-5,3,1) \\
 761 & [1,(1,-1,1,2,1)] & (1,-1,0,1,2,1) & (0,0,1,-1,0,0) & (-1,-6,-1,1) \\
 784 & [2,(4,0,-3,0,1)] & (1,0,0,-3,0,1) & (0,0,1,-2,0,0) & (-12,-8,3,2) \\
 788 & [1,(1,-1,2,-2,2)] & (1,-1,0,2,-2,2) & (0,0,1,-1,0,0) & (10,-6,-2,1) \\
 832 & [1,(1,-2,0,-2,5)] & (1,-2,0,0,-2,5) & (0,0,1,-1,0,0) & (-24,-16,0,1) \\
 837 & [1,(1,-1,-3,-1,7)] & (1,-1,0,-3,-1,7) & (0,0,1,-1,0,0) & (-92,-27,3,1) \\
 873 & [2,(4,-6,-1,3,1)] & (1,-3,0,-1,3,1) & (0,0,1,-2,0,0) & (-22,-17,1,2) \\
 892 & [1,(1,-1,-1,0,2)] & (1,-1,0,-1,0,2) & (0,0,1,-1,0,0) & (-10,-8,1,1) \\
 981 & [1,(1,-1,-4,4,7)] & (1,-1,0,-4,4,7) & (0,0,1,-1,0,0) & (-135,-32,4,1) \\
 985 & [1,(1,-1,2,-3,2)] & (1,-1,0,2,-3,2) & (0,0,1,-1,0,0) & (5,-5,-2,1) \\
 1008 & [1,(1,0,-5,0,7)] & (1,0,0,-5,0,7) & (0,0,1,-1,0,0) & (-140,-28,5,1) \\
 1008 & [1,(1,0,5,0,7)] & (1,0,0,5,0,7) & (0,0,1,-1,0,0) & (140,-28,-5,1) \\
 1016 & [1,(1,-1,1,-2,2)] & (1,-1,0,1,-2,2) & (0,0,1,-1,0,0) & (2,-6,-1,1) \\
 1025 & [2,(4,-2,3,-1,1)] & (1,-1,0,3,-1,1) & (0,0,1,-2,0,0) & (10,-7,-3,2) \\
 1040 & [1,(1,0,-3,-2,5)] & (1,0,0,-3,-2,5) & (0,0,1,-1,0,0) & (-64,-20,3,1) \\
 1040 & [4,(16,32,25,8,1)] & (1,8,0,25,8,1) & (0,0,1,-4,0,0) & (-28,48,-25,4) \\
 1076 & [1,(1,-1,3,-3,2)] & (1,-1,0,3,-3,2) & (0,0,1,-1,0,0) & (13,-5,-3,1) \\
 1088 & [1,(1,-2,1,-2,3)] & (1,-2,0,1,-2,3) & (0,0,1,-1,0,0) & (-4,-8,-1,1) \\
 1088 & [1,(1,-2,5,-4,2)] & (1,-2,0,5,-4,2) & (0,0,1,-1,0,0) & (16,0,-5,1) \\
 1089 & [6,(36,66,43,11,1)] & (1,11,0,43,11,1) & (0,0,1,-6,0,0) & (-70,97,-43,6) \\
 1125 & [1,(1,-1,-4,4,1)] & (1,-1,0,-4,4,1) & (0,0,1,-1,0,0) & (-33,-8,4,1) \\
 1129 & [1,(1,-1,0,-1,2)] & (1,-1,0,0,-1,2) & (0,0,1,-1,0,0) & (-3,-7,0,1) \\
 1161 & [4,(112,116,51,11,1)] & (7,29,0,51,11,1) & (0,0,1,-4,0,0) & (-260,207,-51,4) \\
 1168 & [3,(9,24,23,8,1)] & (1,8,0,23,8,1) & (0,0,1,-3,0,0) & (-36,52,-23,3) \\
 1197 & [1,(1,-2,-4,5,7)] & (1,-2,0,-4,5,7) & (0,0,1,-1,0,0) & (-165,-38,4,1) \\
 1197 & [3,(9,3,-5,-1,1)] & (1,1,0,-5,-1,1) & (0,0,1,-3,0,0) & (-22,-13,5,3) \\
 1225 & [6,(36,42,23,7,1)] & (1,7,0,23,7,1) & (0,0,1,-6,0,0) & (-6,25,-23,6) \\
 1229 & [1,(1,-1,3,-1,3)] & (1,-1,0,3,-1,3) & (0,0,1,-1,0,0) & (32,-11,-3,1) \\
 1257 & [1,(1,0,-1,-1,2)] & (1,0,0,-1,-1,2) & (0,0,1,-1,0,0) & (-9,-8,1,1) \\
 1264 & [1,(1,0,3,-2,1)] & (1,0,0,3,-2,1) & (0,0,1,-1,0,0) & (8,-4,-3,1) \\
 1280 & [1,(1,0,-4,0,5)] & (1,0,0,-4,0,5) & (0,0,1,-1,0,0) & (-80,-20,4,1) \\
 1372 & [1,(1,-1,3,-4,2)] & (1,-1,0,3,-4,2) & (0,0,1,-1,0,0) & (6,-4,-3,1) \\
 1384 & [1,(1,-1,-1,2,2)] & (1,-1,0,-1,2,2) & (0,0,1,-1,0,0) & (-14,-10,1,1) \\
 1396 & [1,(1,-1,1,-1,2)] & (1,-1,0,1,-1,2) & (0,0,1,-1,0,0) & (5,-7,-1,1) \\
 1413 & [3,(9,-3,7,-1,1)] & (1,-1,0,7,-1,1) & (0,0,1,-3,0,0) & (26,-11,-7,3) \\
 1421 & [1,(1,-2,2,-1,2)] & (1,-2,0,2,-1,2) & (0,0,1,-1,0,0) & (7,-6,-2,1) \\
 1424 &    \    & (1, 0, 0, 1, -1, -1), &   (0, 0, 2, -1, 1, 3) & (-5, 18, -17, 4) \\ 
 1436 & [1,(1,-1,3,0,2)] & (1,-1,0,3,0,2) & (0,0,1,-1,0,0) & (22,-8,-3,1) \\
 1489 & [1,(1,-1,4,-1,2)] & (1,-1,0,4,-1,2) & (0,0,1,-1,0,0) & (29,-7,-4,1) \\
 1492 & [1,(1,-1,2,0,2)] & (1,-1,0,2,0,2) & (0,0,1,-1,0,0) & (14,-8,-2,1) \\
 1509 & [1,(1,-1,2,2,1)] & (1,-1,0,2,2,1) & (0,0,1,-1,0,0) & (3,-6,-2,1) \\
 1521 & [12,(6192,2748,463,35,1)] & (43,229,0,463,35,1) & (0,0,1,-12,0,0) & (-25480,5951,-463,12) \\
 1525 & [1,(1,-2,6,-5,5)] & (1,-2,0,6,-5,5) & (0,0,1,-1,0,0) & (75,-10,-6,1) \\
 1552 & [1,(1,-2,-3,4,5)] & (1,-2,0,-3,4,5) & (0,0,1,-1,0,0) & (-96,-28,3,1) \\
 1556 & [1,(1,-1,-1,1,2)] & (1,-1,0,-1,1,2) & (0,0,1,-1,0,0) & (-11,-9,1,1) \\
 1568 & [1,(1,0,-1,0,2)] & (1,0,0,-1,0,2) & (0,0,1,-1,0,0) & (-8,-8,1,1) \\
 1593 & [1,(1,-2,3,-1,2)] & (1,-2,0,3,-1,2) & (0,0,1,-1,0,0) & (15,-6,-3,1) \\
\end{longtable}
\end{center}
\normalsize


\begin{thebibliography}{99} 

\bibitem{Belabas4} K.\ Belabas, {\em A fast algorithm to compute cubic fields}, Math. Comp. {\bf 66}, (1997), no. 219, 1213--1237. MR1415795 (97m:11159)

\bibitem{Bhargavathesis} M.\ Bhargava, Higher composition laws, Ph.D. thesis, Princeton University, 2001. MR2702004

\bibitem{Bhargavacomp4} M.\ Bhargava, {\em Higher composition laws. III. The parametrization of quartic rings.} Annals of Mathematics (2) {\bf 159} (2004), no. 3, 1329--1360. MR2113024 (2005k:11214)

\bibitem{DeloneFaddeev4} B.\ N.\ Delone, D.\ K.\ Faddeev, {\em The theory of irrationalities of the third degree}, Translations of Mathematical Monographs, Vol. 10 American Mathematical Society, Providence, R.I. 1964. MR0160744 (28 \#3955)

\bibitem{GilbPath} W.\ J.\ Gilbert, A. Pathria, {\em Linear Diophantine equations}, \\
\small 
\url{https://www.math.uwaterloo.ca/~wgilbert/Research/GilbertPathria.pdf}
\normalsize 
\bibitem{hamblarithnf4} S.\ A.\ Hambleton, {\em Arithmetic matrices for number fields I}, arXiv:1808.10060 \url{https://arxiv.org/abs/1808.10060}

\bibitem{cfg4} S.\ A.\ Hambleton, H.\ C.\ Williams, Cubic fields with geometry, CMS Books in Mathematics, Springer, 2018. MR3839293

\bibitem{Levi4} F.\ W.\ Levi, {\em Kubische Zahlk{\"o}rper und bin{\"a}re kubische Formenklassen, Leipziger Berichte}, {\bf 66}, (1914), 26--37.

\bibitem{LMFDB} LMFDB - The L-functions and modular forms database, \url{http://www.lmfdb.org/NumberField/?degree=4}

\bibitem{MathStackEx} MathStackExchange, Question 2935205, \\
\url{https://math.stackexchange.com/questions/2935205/}

\bibitem{Nakagawa4} J.\ Nakagawa, {\em Binary forms and orders of algebraic number fields}, Invent. Math. {\bf 97} (1989) no. 2, 219--235. MR1001839 (90k:11042)

\bibitem{ODorney} E. M. O'Dorney, {\em Rings of small rank over a Dedekind domain and their ideals}, \url{https://arxiv.org/pdf/1508.02777.pdf}

\bibitem{Wood4} M.\ M.\ Wood, {\em Quartic rings associated to binary quartic forms}, International Mathematics Research Notices, {\bf 2012}, (2012) no. 6, 1300--1320. MR2899953

\end{thebibliography}
\end{document}